\newcommand{\Ric}{\mathrm{Ric} \,}
\newcommand{\R}{\mathbb{R}}
\newcommand{\ddbar}{i\partial\overline{\partial}}
\newcommand{\Supp}{\mathrm {Supp}}
\renewcommand{\epsilon}{\varepsilon}
\renewcommand{\leq}{\leqslant}
\renewcommand{\geq}{\geqslant}
\newcommand{\Dlc}{D_{\rm lc}}
\newcommand{\Dklt}{D_{\rm klt}}
\def\XXint#1#2#3{{\setbox0=\hbox{$#1{#2#3}{\int}$ }
\vcenter{\hbox{$#2#3$ }}\kern-.6\wd0}}
\newtheorem{theorem}{Theorem}[section]
\newtheorem{lemma}[theorem]{Lemma}
\newtheorem{corollary}[theorem]{Corollary}
\newtheorem{proposition}[theorem]{Proposition}
\theoremstyle{definition}
\newtheorem{definition}[theorem]{Definition}
\newtheorem{remark}[theorem]{Remark}
\numberwithin{equation}{section}
\begin{document}
	
\title{Variation of K\"ahler-Einstein metrics with mixed singularities}
	
\begin{abstract} In this short note, we consider a fibration $\pi: (\mathcal X,\Delta)\to Y$ between two compact K\"ahler manifolds with generic fiber of $\pi$ being a smooth log canonical pair with ample canonical divisor, we prove that the current induced by variation of K\"ahler-Einsteins with mixed cone and Poincare singularities is positive. This generalizes the result of Schumacher in the smooth case \cite{Schu} and the result of Guenancia in the conic case \cite{G}. As a consequence, we prove a new positivity result for the relative canonical bundle. Using this positivity result, we show that if $-(K_{X}+\Delta)$ is nef, then $-K_Y$ is pseudo-effective, which partially answers a question of Demailly-Peternell-Schneider in the K\"ahler case and moreover, the Albanese map associated to $X$ is surjective.

\end{abstract}
	
\author{Xin Fu}
\address{School of sciences, Institute for Theoretic Sciences, Westlake University, Hangzhou, Zhejiang Province, 310030, China}

\email{fuxin54@westlake.edu.cn}
	\author{Jiyuan Han}
\address{School of sciences, Institute for Theoretic Sciences, Westlake University, Hangzhou, Zhejiang Province, 310030, China}

\email{hanjiyuan@westlake.edu.cn}

\author{Yongpan Zou}
\address{School of sciences, Institute for Theoretic Sciences, Westlake University, Hangzhou, Zhejiang Province, 310030, China}

\email{zouyongpan@westlake.edu.cn}
	
\date{\today}
	
\maketitle
	
	
\setcounter{tocdepth}{3}
\section{Introduction}
Given a fibration $\pi: \mathcal X\to Y$ between two K\"ahler manifolds, it is important to understand the property of the relative canonical bundle $K_{\mathcal X/Y}$. A differential algebraic approach is to equip the relative canonical bundle with some canonical metrics. One useful metric is the so called Bergman metric (c.f \cite{BP}), which works in very general context. On the other hand, when the generic fiber of $\pi$ is canonical polarized or Calabi-Yau, another useful tool is to study the variation of K\"ahler-Einstein metrics associated to this family and this leads to deep application to the positive of property of $K_{\mathcal X/Y}$ and related moduli problems (c.f. \cite{Tian,Schu,Tsu,CGP1,CGP2,G,BCS,Nau}).

Going back to this short note, we  we consider a fibration $\pi: \mathcal X\to Y$ between two K\"ahler manifolds and moreover we assume the generic fiber of $\pi$ is a canonical polarized smooth log canonical pair. This set up is well studied by \cite{Schu,Tsu} when the generic fiber is a smooth canonical polarized manifold and by \cite{G} when the generic fiber is a smooth canonical polarized Klt pair $(X,D)$. In short, all of them are able to show that the variation of K\"ahler-Einstein metric will induce a positive singular metric on $K_{\mathcal X/Y}+D$. In \cite{G}, Guenancia asks the question that how about the case when the generic fiber is a smooth log pair. \cite{Nau,To} is able to confirm the positivity of $K_{\mathcal X/Y}+D$ when the boundary component $D$ is reduced.  In this note, we are able to prove the positivity of $K_{\mathcal X/Y}+D$ only requiring that all the components of $D$ have coefficient lies in $[0,1]$ by using a different argument. So it seems that our more general result is new. Now let us state our main result precisely.
\begin{theorem}\label{t1}
Let $\pi:\mathcal X\to Y$ a holomorphic surjective map between compact K\"ahler manifolds, $D=\sum_{i=1}^r(1-\beta_i) D_i$, where $D_i$ is a reduced divisor with generically simple normal crossings and   $\beta_i\in [0,1]$ such that the cohomology class $c_1(K_{\mathcal X_y}+\sum_{i=1}^r (1-\beta_i){D_i}_{|{\mathcal X_y}})+\{\gamma\}$ is K\"ahler for every  $y\in Y\setminus\mathcal{S} $, where $\mathcal S$ is the singular locus of $\pi$ and $\gamma$ is a smooth semi-positive $1$-$1$ form on $X$. Then by gluing the fiberwise K\"ahler-Einstein metric with mixed singularity, we obtain a closed positive current $\rho_{mix}$ in 
$ c_1(K_{\mathcal X/Y}+ \sum_{i=1}^r(1-\beta_i) D_i)+\{\gamma\}$ (See section \eqref{e:rhodef} for the precise definition of $\rho_{mix}$).
\end{theorem}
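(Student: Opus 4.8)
The plan is to follow the strategy of Schumacher \cite{Schu} and Guenancia \cite{G}, the new point being to run it when cone ($\beta_i>0$) and Poincar\'e ($\beta_i=0$) singularities coexist along the components of $D$ and a semipositive twist $\gamma$ is present. First I would fix the fiberwise data. For $y\in Y\ssm\mathcal S$ the pair $(\mathcal X_y,D|_{\mathcal X_y})$ is a smooth log canonical pair whose twisted log canonical class is K\"ahler, so there is a unique K\"ahler--Einstein metric $\omega_{KE}$ on $\mathcal X_y\ssm D$ with mixed cone--cusp singularities along $D|_{\mathcal X_y}$ and
\[
\Ric\omega_{KE}=-\omega_{KE}+\gamma|_{\mathcal X_y}\qquad\text{on }\ \mathcal X_y\ssm D
\]
(for existence, uniqueness and the relevant local quasi-isometry models see \cite{CGP1,CGP2} and the references therein). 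Together with fixed smooth Hermitian metrics $h_i$ on $\mathcal O(D_i)$ and a smooth reference form $\omega_0$ in the class of the theorem, the family of volume forms $\omega_{KE}^n$ ($y\in Y\ssm\mathcal S$) determines a singular metric on $K_{\mathcal X/Y}+\sum_{i=1}^r(1-\beta_i)D_i$ over $\mathcal X\ssm\pi^{-1}(\mathcal S)$, and $\rho_{mix}$ is (up to the conventions of the later definition) its curvature plus $\gamma$; equivalently $\rho_{mix}=\omega_0+\ddbar\Phi$ for a potential $\Phi$ smooth on $\mathcal X\ssm(\pi^{-1}(\mathcal S)\cup D)$ whose restriction to each fiber is $\omega_{KE}$. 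To control the dependence on $y$ near $D$ I would, for each $\epsilon>0$, solve a smooth relative Monge--Amp\`ere equation obtained by replacing the singular weights attached to the $D_i$ by smooth approximations, producing a family $\omegate$ that depends smoothly on the fiber and base variables and converges to $\omega_{KE}$ locally uniformly on $\mathcal X\ssm(\pi^{-1}(\mathcal S)\cup D)$.

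Next I would reduce positivity of $\rho_{mix}$ to a scalar inequality on the fibers. Since the fiber--fiber part of $\rho_{mix}$ is $\omega_{KE}>0$, the block-Hermitian linear algebra lemma used in \cite{Schu,G} shows that positivity of $\rho_{mix}$ at a point is equivalent to positivity of the \emph{geodesic curvature} $c(\omega):=\rho_{mix}(v,\bar v)$, where $v$ is the horizontal lift of a base direction determined by $\rho_{mix}$. Differentiating the Einstein equation in the base directions then yields, on each fiber, an identity of the form
\[
(-\Delta_{\omega_{KE}}+1)\,c(\omega)=|\bar\partial v|^2_{\omega_{KE}}+(\text{semipositive contribution of }\gamma)\ \geq\ 0 ,
\]
where $\bar\partial v$ represents the Kodaira--Spencer class of the deformation and the last term is nonnegative exactly because $\gamma$ is semipositive. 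From this one deduces $c(\omega)\geq0$: along a cone component the fiber with $\omega_{KE}$ is a compact metric space and $c(\omega)$ extends continuously by conic regularity, so the ordinary minimum principle applies; along a cusp component $\mathcal X_y\ssm D$ is complete with $\Ric\omega_{KE}\geq-\omega_{KE}$ and $c(\omega)$ bounded, so the Omori--Yau maximum principle gives $\inf c(\omega)\geq0$. In practice it is cleanest to carry this computation and maximum-principle argument out for the regularized families $\omegate$ on the compact total space, getting $c(\omegate)\geq-C\epsilon$ from the compact maximum principle, and then to let $\epsilon\to0$ using uniform $C^0$ bounds together with local higher-order estimates for the potentials $\phi_{t,\epsilon}$ away from $D$ (Yau-type and Laplacian estimates in the mixed cone--cusp quasi-coordinates, as in \cite{G,CGP1,CGP2}); this gives $\rho_{mix}\geq0$ on $\mathcal X\ssm\pi^{-1}(\mathcal S)$.

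Finally I would extend $\rho_{mix}$ and identify its cohomology class. The mixed-singularity K\"ahler--Einstein potential has at worst bounded growth near the cone components and $\log\log$ growth near the cusp components, so $\Phi$ is quasi-plurisubharmonic across $D$; moreover the positive closed current $\rho_{mix}$ on $\mathcal X\ssm\pi^{-1}(\mathcal S)$ has locally bounded-above potentials near the analytic set $\pi^{-1}(\mathcal S)$, so by the standard extension theorem for positive closed currents across analytic subsets it extends to a global closed positive current on $\mathcal X$. Since the local potentials differ from those of $\omega_0$ and the fixed metrics $h_i$ by globally defined functions, $\rho_{mix}$ lies in $c_1(K_{\mathcal X/Y}+\sum_{i=1}^r(1-\beta_i)D_i)+\{\gamma\}$, which is the assertion.

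The step I expect to be the main obstacle is the mixed regime in the first two paragraphs: one has to run Schumacher's second-variation computation and, above all, the uniform a priori estimates for the regularized families $\omegate$ \emph{simultaneously} in conic coordinates near the $\beta_i>0$ components and in Cheng--Yau quasi-coordinates near the $\beta_i=0$ components, and to verify that the zeroth-order term and the Kodaira--Spencer term in the equation for $c(\omega)$ keep the correct sign once the semipositive twist $\gamma$ and both singularity types are simultaneously present.
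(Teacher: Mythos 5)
Your route is genuinely different from the paper's, and as written it has two real gaps. The paper does \emph{not} redo Schumacher's second-variation computation in the mixed cone--cusp setting. Instead it perturbs the coefficients $\beta_i=0$ to small $\delta_i>0$, invokes Guenancia's already-established positivity theorem for the purely conic case to get positive glued currents $\rho_\delta$ on $\mathcal X_0$, and then passes to the limit $\delta\to 0$ using Guenancia's cone-to-cusp convergence on each fiber together with a new family-uniform $C^0$ estimate $-\sum\log\log^2|s_i|^2_{h_i}-C\leq\phi_\delta\leq C$ (uniform in the base point over compact subsets of $Y\ssm\mathcal S$), plus dominated convergence and a Hartogs-type argument. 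Your proposal instead asks to run the geodesic-curvature identity $(-\Delta+1)c(\omega)=|\bar\partial v|^2+\cdots$ directly for the mixed metrics and to justify it by uniform a priori estimates for a regularized family $\omegate$ simultaneously in conic coordinates and Cheng--Yau quasi-coordinates. You correctly flag this as ``the main obstacle,'' but you do not supply those estimates, and they are precisely the hard analytic content that the paper's deformation trick is designed to avoid. There is also a structural problem with your maximum-principle step: in the mixed case $\mathcal X_y\ssm D$ is neither compact nor complete (it is incomplete near the cone components and complete near the cusp components), and $c(\omega)$ is a single function on this space, so you cannot literally argue ``along a cone component use the ordinary minimum principle, along a cusp component use Omori--Yau''; the infimum may be approached where the two regimes interact (e.g.\ near intersections of cone and cusp components), and handling that requires either barriers or exactly the kind of uniform regularized estimates you have deferred.

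The second gap is in the extension across the singular fibers. You assert that $\rho_{mix}$ ``has locally bounded-above potentials near the analytic set $\pi^{-1}(\mathcal S)$'' and then invoke a standard extension theorem. But $\pi^{-1}(\mathcal S)$ is in general a divisor, so no codimension-two extension applies, and the upper bound of the potential as $y\to\mathcal S$ is not automatic: the fiberwise normalization of the K\"ahler--Einstein potentials could a priori blow up as the fibers degenerate. This is the content of the paper's second lemma in Section 4, proved via Demailly's regularization formula $\varphi_y(x_0)=\lim_m \sup_{f\in H_{m,y}}\frac{1}{m}\log|f(x_0)|$ combined with the $L^{2/m}$ Ohsawa--Takegoshi extension theorem of Berndtsson--P\u{a}un, following P\u{a}un's argument (with the observation that only an upper bound on the function $G$ in the fiberwise Monge--Amp\`ere equation is needed). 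Without some version of that argument, your final extension step does not go through.
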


Now we explain the idea of proof. We assume that $\gamma=0$. Under the same set-up (but require $\beta_i> 0$), by gluing fiberwise conic K\"ah le r-Einstein metrics, Guenancia \cite{G} obtains a closed positive current $\rho$ in $ c_1(K_{\mathcal X/Y}+ \sum_{i=1}^r(1-\beta_i) D_i)$. Now if $\beta_1=0$ (for simplicity, we assume  $0<\beta_i<1,i>1$), we perturb $D_1$ by any sufficiently small number $\delta>0$ to $(1-\delta)D_1$ , then for such $\delta$, there exists a closed positive current $\rho_\delta$ by Guenanica's result. Then the key is to show  the weak limit of $\rho_\delta$, when $\delta\to 0$, is the desired current $\rho_{mix}$ by gluing fiberwise K\"ahler-Einstein metric with mixed singularities. 
On a fixed fiber, such a convergence result is proved by Guenancia \cite{G2}. We prove the weak convergence of $\rho_\delta$ to $\rho_{mix}$ on the locus  where the fibration $\pi$ is smooth.

As a consequence of Theorem \ref{t1}, we obtain a new positivity result for the relative canonical bundle. 
\begin{theorem} \label{psef}
Let $p \colon X \to Y$ be a surjective map between two compact K\"{a}hler manifolds. 
Let \( F_{\alpha}:= \sum \alpha_i F_i \) be an \( \mathbb{R} \)-divisor whose supported divisor is as in the above setup, where each \( \alpha_i \in [0,1] \).
If $L \to X$ is a nef \(\mathbb{R} \)-line bundle, such that the adjoint system $K_{X_y} + (L+F_{\alpha})|_{X_y} $ is nef for generic $y \in Y$. Then the \( \mathbb{R} \)-bundle $K_{X/Y} + L +F_{\alpha}$ is pseudo-effective.
\end{theorem}

Since the canonical positively curved singular metric for an irreducible divisor $D$ is not $L^2$ integrable, it will be interesting to see if the $L^2$ method can be used to obtain the above result. On the other hand, the anonymous referee points out that the above theorem can be proved by using Guenancia's positivity result (c.f. Theorem \ref{t:positivity} and Remark \ref{newproof}).

We give two applications of the positivity theorem above. For the first one, we address a question of Demailly-Peternell-Schneider: Let $\pi$ be a surjective morphism from a log canonical pair $(X,D)$ onto a $\mathbb Q$-Gorenstein variety $Y$. If $-(K_X+D)$ is nef, then $-K_Y$ is  pseudo-effective. In the projective case, this conjecture is fully confirmed by Chen-Zhang in \cite{CZ}. We partially confirm this conjecture in the K\"ahler case, more precisely, we have
\begin{theorem}\label{Demailly} Suppose $\pi:(X,D)\to Y$ is a fibration between two compact K\"ahler manifolds. Further suppose that $(X,D)$ is log canonical and $-(K_X+D)$ is nef, then $-K_Y$ is pseudo-effective. 
\end{theorem}
For the second one, we prove that given a smooth log canonical pair $(X,D)$ with nef anti canonical divisor, the Albanese map is surjective, following the idea of Paun \cite{Paun}. Let us briefly recall known results on the surjectivity of Albanese map when the anti canonical bundle is nef.  In \cite{DPS}, the authors conjectured that the Albanese map \(\alpha_{X}\) is surjective when $X$ is a smooth K\"ahler manifold when \(-K_{X}\) is nef. This conjectured is proved by Zhang \cite{Zha96,Zha19} when $X$ is a projective KLT variety and proved by M. P\u{a}un  \cite{Paun} when $X$ is smooth and K\"ahler. In a recent preprint \cite{MWWZ}, the authors are able to confirm the surjectivity of Albanese map when $(X,D)$ is a K\"ahler KLT pair ($X$ smooth and $D$ not necessary smooth). 
Here, we prove the surjectivity of Albanese map for a smooth log canonical pair. As far as we know, this is new.
\begin{theorem} \label{t2}
Let \(X\) be a compact K\"ahler manifold, and let $D=\sum D_i$ be a simple normal crossing divisor on \(X\), and log canonical divisor \(D_{\alpha} = \sum \alpha_i D_i\) with coefficients \(\alpha_i \in [0,1]\). Assume that the \(\mathbb{R}\)-line bundle \(-(K_{X} + D_{\alpha})\) is nef. Then the Albanese morphism \(\alpha_{X}\colon X \to \mathrm{Alb}(X)\) is surjective.
\end{theorem}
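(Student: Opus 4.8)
\medskip
\noindent\textbf{Proof strategy for Theorem~\ref{t2}.}
The plan is to argue by contradiction, following the scheme of P\u{a}un \cite{Paun} (compare Zhang \cite{Zha96,Zha19} and \cite{MWWZ}), with Theorem~\ref{t1} supplying the positivity that is not available in the genuinely log canonical range $\alpha_i\in[0,1]$. Suppose that $\alpha_X\colon X\to A:=\mathrm{Alb}(X)$ were not surjective, and put $Z:=\alpha_X(X)\subsetneq A$. First I would apply Ueno's structure theorem for subvarieties of abelian varieties: since $Z$ generates $A$ as a complex torus, there is a subtorus $T_1\subseteq A$ such that $Z$ is stable under translation by $T_1$ and $W_0:=Z/T_1\hookrightarrow A/T_1$ is a subvariety of general type; moreover $\dim W_0\geq1$, since otherwise $Z=T_1$ would be a proper subtorus not generating $A$. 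After the usual reductions (Stein factorization, and a careful choice of smooth birational models --- see below) one is left with a fibration $f\colon X\to W$ with connected fibres onto a positive-dimensional compact K\"ahler manifold of general type, with the general fibre meeting $D_\alpha$ transversally. If $f$ were generically finite, then $f^{*}K_W$ would be big, hence $K_X=f^{*}K_W+R$ with $R\geq0$, and a fortiori $K_X+D_\alpha$, would be big; but nefness of $-(K_X+D_\alpha)$ gives $\int_X c_1(K_X+D_\alpha)\wedge\omega^{\,n-1}\leq0$ for a K\"ahler form $\omega$ ($n=\dim X$), contradicting bigness. So the general fibre $F$ of $f$ has positive dimension.

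By adjunction $(F,D_\alpha|_F)$ is again a smooth log canonical pair with simple normal crossing boundary of coefficients in $[0,1]$ and $-(K_F+D_\alpha|_F)=-(K_X+D_\alpha)|_F$ nef, so $\kappa(F,K_F+D_\alpha|_F)\leq0$. If this log Kodaira dimension were $-\infty$, the next step would be to strip off the log-uniruled part of the fibres: invoking the structure theory of K\"ahler pairs with nef anti-log-canonical class --- a relative minimal model program, or maximal rationally connected reduction, over $W$ --- one replaces $f$ by an auxiliary fibration $V\to W$ over the same base, with $V$ a compact K\"ahler manifold carrying a boundary $\Delta_V$ of coefficients in $[0,1]$, whose generic fibre is log non-uniruled, and with $-(K_V+\Delta_V)$ still pseudoeffective; the generic fibre then has pseudoeffective log canonical class and, being also anti-pseudoeffective by adjunction, numerically trivial log canonical class. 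In the remaining case $\kappa=0$ one already has $K_F+D_\alpha|_F\equiv0$, an anti-nef class of Kodaira dimension $0$ being numerically trivial. Renaming, we are reduced to a fibration $f\colon X\to W$ over a positive-dimensional general type base, with $-(K_X+\Delta)$ pseudoeffective and generic fibre $(F,\Delta_F)$ a smooth log canonical pair with $c_1(K_F+\Delta_F)=0$.

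This is where Theorem~\ref{t1} is used. Fix a K\"ahler form $\omega$ on $X$; for each integer $k\geq1$ the smooth semipositive form $\gamma_k:=\tfrac1k\omega$ satisfies the hypothesis of Theorem~\ref{t1} for the family $f$, because on the generic fibre $c_1(K_F+\Delta_F)+\tfrac1k\{\omega|_F\}=\tfrac1k\{\omega|_F\}$ is K\"ahler. Hence Theorem~\ref{t1} produces a closed positive current in $c_1(K_{X/W}+\Delta)+\tfrac1k\{\omega\}$ for every $k$, and letting $k\to\infty$ --- the pseudoeffective cone being closed --- one gets that $c_1(K_{X/W}+\Delta)$ is pseudoeffective on $X$. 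Therefore $c_1(K_X+\Delta)=f^{*}c_1(K_W)+c_1(K_{X/W}+\Delta)$ is a sum of pseudoeffective classes, hence pseudoeffective; since $-(K_X+\Delta)$ is pseudoeffective as well, $c_1(K_X+\Delta)\equiv0$, so that $f^{*}c_1(K_W)=-c_1(K_{X/W}+\Delta)$ is anti-pseudoeffective. But $f^{*}c_1(K_W)$ is also pseudoeffective ($K_W$ being big), so $f^{*}c_1(K_W)\equiv0$; as $f^{*}\colon H^2(W,\R)\to H^2(X,\R)$ is injective, $c_1(K_W)\equiv0$, which contradicts $W$ being of general type. This contradiction establishes the surjectivity of $\alpha_X$.

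The step I expect to be the main obstacle is the one in the second paragraph: carrying out the relative MMP / rationally connected reduction over $W$ in the non-projective K\"ahler setting, and --- throughout that process and the choice of smooth models in the first paragraph --- keeping the boundary coefficients inside $[0,1]$ (so that the generic fibre stays log canonical and Theorem~\ref{t1} is applicable) while retaining pseudoeffectivity of $-(K_X+\Delta)$; here one genuinely uses that $-(K_X+\Delta)$ is nef, not merely pseudoeffective, on the original $X$. Once the fibres are log Calabi--Yau, everything else is soft, the single substantial input being the positivity of the current $\rho_{mix}$ glued from the fibrewise K\"ahler--Einstein metrics with mixed cone and Poincar\'e singularities --- that is, Theorem~\ref{t1} itself.
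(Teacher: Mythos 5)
Your overall scheme (contradiction via Ueno, reduce to a fibration over a positive-dimensional base, use Theorem~\ref{t1} to make $K_{X/W}+\Delta$ pseudoeffective, derive a contradiction with positivity of $K_W$) is the right family of ideas, but the step you yourself flag as the main obstacle --- reducing the general fibre to one with numerically trivial log canonical class via a relative MMP or MRC reduction in the K\"ahler lc category --- is a genuine gap, and it is not how the paper proceeds. No such reduction of the fibres is available or needed. The point you are missing is that one should keep the nef twist $L:=-(K_X+D_\alpha)$ in the adjoint class throughout. The paper forms the fibre product of $X$ with a resolution $\widehat Y$ of the Albanese image, resolves to get $p\colon\widehat X\to\widehat Y$ and $\pi_X\colon\widehat X\to X$ with $K_{\widehat X}=\pi_X^*K_X+E$, and computes
\[
K_{\widehat X/\widehat Y}+\pi_X^*(D_\alpha+L)=E-p^*K_{\widehat Y},
\]
whose restriction to the generic fibre is $E|_{\widehat X_y}=0$ because $E$ is $\pi_X$-exceptional and misses the generic fibre. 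So the fibrewise class $c_1\bigl(K_{\widehat X_y}+(\pi_X^*D_\alpha+\pi_X^*L)|_{\widehat X_y}\bigr)$ is \emph{automatically trivial}, hence becomes K\"ahler after adding $\varepsilon\omega$, and Corollary~\ref{psef} (i.e.\ Theorem~\ref{t1} with $\{\gamma\}=c_1(L)+\varepsilon\omega$, followed by $\varepsilon\to0$) applies directly to the horizontal part $D_h$ of $\pi_X^*D_\alpha$. In your version you drop $L$ from the fibrewise class and are then forced to manufacture nefness of $K_F+\Delta_F$ by hand, which is exactly the unavailable step; restoring the twist makes the entire second paragraph of your argument unnecessary.

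Your endgame also differs from the paper's and, as written, needs more than you have. You want $W$ of general type so that $f^*c_1(K_W)\equiv 0$ yields a contradiction; this requires the Ueno subtorus-quotient reduction and a further passage to a new fibration, with all the attendant model problems. The paper only uses $\kappa(\widehat Y)\geq 1$ (from Ueno's Corollary~10.6 applied to the Albanese image): it picks two distinct $\mathbb{Q}$-effective divisors $W_1\not\equiv W_2$ representing $K_{\widehat Y}$, obtains two distinct closed positive currents $\Lambda+p^*W_j$ in the class of the $\pi_X$-contractible divisor $E$, and derives a contradiction with the linear independence of the classes of the components of $E$ (Grauert's negativity of the exceptional intersection form). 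That argument lives entirely on the original birational models and avoids both the general-type reduction and any modification of the fibres. I would also note a smaller issue in your first paragraph: the claim that generic finiteness of $f$ makes $K_X$ (hence $K_X+D_\alpha$) big via $K_X=f^*K_W+R$ with $R\geq 0$ presupposes a morphism to a smooth model of general type with controlled discrepancies; in the paper this case never needs to be separated out, since the current-theoretic contradiction works uniformly.
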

\thanks{\textbf{Acknowledgement:} We would like to thank Professor Henri Guenancia, Jiawei Liu for helpful discussions. We also thank Professor Gang Tian for his suggestion to include a more general case of our main Theorem.  We also thank the anonymous referee for useful comments.  Xin Fu is  supported by National Key R\&D Program of China  2024YFA1014800 and NSFC No. 12401073. Yongpan Zou is supported by National Key R\&D Program of China 2023YFA1009900.}
\section{Construction of K\"ahler-Einstein metrics} In this section, we recall the known results on construction of K\"ahler-Einstein metrics with negative scalar curvature. Firstly we introduce a  smooth log canonical pair.
	\begin{definition}A smooth \textit{log canonical} pairs $(X,D)$ consists of  a compact Kähler manifold $X$ and a divisor $D= \sum_{i=1}^{s} D_i +\sum_{i=s+1}^r (1-\beta_i)D_i $ having simple normal crossing support and coefficients $\beta_i \in (0,1)$. We write $X_0:=X \setminus \Supp (D)$, $D= \Dklt+\Dlc$ where  $\Dlc=\sum_{i=1}^s D_i$ and $\Dklt:= \sum_{i=s+1}^r (1-\beta_i)D_i$.
 \end{definition}

\subsection{Mixed cone and cusp singularities} In this subsection, we recall how K\"ahler-Einstein metric with mixed conic and cusp singularties are constructed on a log smooth log canonical pair $(X,D)$ with $K_X+D>0$ ample. When $D=\emptyset$, Aubin \cite{Aub} and Yau \cite{Yau} confirmed the existence of K\"ahler-Einstein metric on $X$.
When $\Dklt=0$, it was showed by Kobayashi \cite{Kob} and Tian-Yau \cite{TY} that whenever $K_X+D$ is ample, there exists a unique negatively curved K\"ahler-Einstein metric on $X_0$ having cusp  singularities along $D$. More generally, when both $\Dklt$ and $\Dlc$ are none empty, Guenanica-Paun \cite{GP} (see also \cite{CGP,Don,Bre,JMR,DS} for related results) constructed a unique K\"ahler-Einstein metric with mixed singularity associted to the pair $(X,D)$. These metrics are smooth K\"ahler-Einstein metrics on $X_0$, and have mixed cone and cusp singularities along $D$, i.e., being locally quasi-isometric to the model metric
\[\omega_{\rm mod}:=\sum_{i=1}^s \frac{i dz_i\wedge d\bar z_i}{|z_i|^{2} \log^{2}|z_i|^{2}}+\sum_{i=s+1}^r \frac{i dz_i\wedge d\bar z_i}{|z_i|^{2(1-\beta_i)}}+\sum_{j=r+1}^n i dz_i\wedge d\bar z_i \] 
if $(X_0,D)$ is locally isomorphic to $(X_{\rm mod}, D_{\rm mod})$, where $X_{\rm mod}=(\mathbb{D}^*)^s\times (\mathbb{D}^*)^{r-s} \times \mathbb{D}^{n-r}$, $D_{\rm mod}=[z_{1}=0]+\cdots+(1-\beta_{s+1}) [z_{s+1}=0]+\cdots+(1-\beta_r) [z_r=0]+\cdots + [z_{n}=0]$; $\mathbb{D}$ (resp. $\mathbb{D}^*$) being the disc (resp. punctured disc) of radius $1/2$ in $\mathbb C$.

Let $\omega$ be a smooth K\"ahler metric on $X$, $L_i$ be the line bundle associated to divisor $D_i$, $h_i$ be a smooth Hermitian metric on $L_i$. Also let $\theta_i:=\Ric(h_i)$ be the curvature form of metric $h_i$, and set \[\theta:=\sum_{i=1}^{r}(1-\beta_i)\theta_i, \,\,\tilde\omega=-\Ric(\omega^n)+\theta.\]
Consider the following Monge-Amp\`ere equation related to K\"ahler-Einstein metric with mixed singularity:

\begin{equation}\label{e:mixMA} (\tilde\omega+\ddbar\phi_{mix})^n=\frac{e^{\phi_{mix}}\omega^n}{\prod_{i=1}^s|s_i|_{h_i}^{2}\prod_{i=s+1}^r|s_i|_{h_i}^{2-2\beta_i}},\end{equation}
where  $s_i$ is the defining section of line bundle $L_i$ associated to divisor $D_i$. We have the following existence result \cite{GP}.
\begin{theorem}[Guenancia-Paun]\label{t:GP-existence}
\label{thm:lc}
Let $(X,D)$ be a log smooth log canonical pair such that $K_X+D$ is ample. Then there exists a solution $\phi_{mix}$ to \eqref{e:mixMA} satisfying: 
\begin{enumerate}
\item $\phi_{mix}$ is smooth on $X_0$,
\item $-\sum_{i=s+1}^r\ddbar\log\log^2\prod_{i=s+1}^r|s_i|_{h_i}-C\leq\phi_{mix}\leq C$ for some fixed constant $C$,
\item  Set $\omega_{{mix}}:=\tilde\omega+\phi_{mix}$, then $\Ric\omega_{mix} = -\omega_{mix}$ on $X_0$,
\item  $\omega_{mix}$ has mixed cone and cusp singularities along $D$.
\end{enumerate}
\end{theorem}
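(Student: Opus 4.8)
The plan is to obtain the solution $\phi_{mix}$ as a limit of solutions to a sequence of regularized Monge--Amp\`ere equations. Since the right-hand side of \eqref{e:mixMA} degenerates along $\Dlc$ (cusp direction) and along $\Dklt$ (conic direction), the natural strategy is to first thicken the log canonical part to a klt part: replace $\Dlc=\sum_{i=1}^s D_i$ by $\sum_{i=1}^s(1-\delta)D_i$ for small $\delta>0$, and simultaneously regularize the conic weights by $(|s_i|_{h_i}^2+\epsilon^2)$. For each pair $(\delta,\epsilon)$ one is then solving a genuine complex Monge--Amp\`ere equation whose right-hand side is smooth and positive on all of $X$ (after possibly shrinking the conic angles slightly so that the twisted class stays K\"ahler), and Aubin--Yau together with the Ko\l odziej-type estimates give a unique smooth solution $\phi_{\delta,\epsilon}$.

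The core of the argument is then a priori estimates uniform in $\delta$ and $\epsilon$. First I would establish the uniform two-sided bound in $(ii)$: the upper bound $\phi_{\delta,\epsilon}\le C$ follows from a maximum-principle comparison against a fixed background potential, while the lower bound of the stated cusp type is produced by exhibiting an explicit local barrier of the form $-\log\log^2\prod|s_i|_{h_i}$ near $\Dlc$ and invoking the maximum principle on $X_0$; this is exactly the Kobayashi--Tian--Yau mechanism adapted to the mixed setting. Next comes the second-order estimate: one derives a Chern--Lu / Aubin--Yau inequality for $\tr_{\omega_{\delta,\epsilon}}\omega_{mod}$ (or for $\tr_{\omega_{\rm ref}}\omega_{\delta,\epsilon}$ against a fixed quasi-isometry model metric $\omega_{\rm mod}$), using that the Ricci curvature of the model metric is bounded below and that the bisectional curvature of the cusp/cone model is controlled; combined with the $C^0$ bound this yields the uniform quasi-isometry $C^{-1}\omega_{mod}\le\omega_{\delta,\epsilon}\le C\,\omega_{mod}$ on compact subsets of $X_0$, with the constant independent of $\delta,\epsilon$. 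Higher-order interior estimates on $X_0$ (Evans--Krylov and Schauder, or the complex-analytic bootstrap of \cite{CGP}) then follow on compact subsets.

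With these estimates in hand I would let $\epsilon\to0$ and then $\delta\to0$ (or diagonally), extracting a locally $C^\infty$ limit $\phi_{mix}$ on $X_0$; the uniform $C^0$ and Laplacian bounds pass to the limit, giving $(i)$, $(ii)$ and, upon differentiating the equation, the K\"ahler--Einstein identity $\Ric\,\omega_{mix}=-\omega_{mix}$ on $X_0$ in $(iii)$. The asymptotic behavior $(iv)$ — that $\omega_{mix}$ is globally quasi-isometric to $\omega_{mod}$ along all of $D$, not merely on compacta — is obtained by redoing the Laplacian estimate locally near each stratum of $D$ using the explicit model, where the barrier computations are genuinely local; uniqueness follows from the comparison principle applied to two solutions, using the cusp lower bound to justify the maximum principle near $\Dlc$. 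The main obstacle is the Laplacian estimate with constants uniform in the regularization parameters: one must choose the auxiliary function in the Chern--Lu inequality and control the curvature error terms coming from both the conic smoothing and the $\delta$-perturbation of the cusp locus simultaneously, which is precisely the technical heart of \cite{GP} and where the mixed nature of the singularities makes the two families of model metrics interact.
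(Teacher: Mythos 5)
This theorem is not proved in the paper: it is imported verbatim from Guenancia--Paun \cite{GP} (with the cosmetic change of reference form noted in the paper's Remark 2.5), so there is no in-paper argument to compare against. Your sketch is nonetheless a faithful outline of the known proof strategy, with one structural difference worth flagging: you approximate the cusp (log canonical) components by cones of angle $2\pi\delta$ and let $\delta\to 0$, which is the route of \cite{G2} --- and, incidentally, exactly the deformation this paper itself runs in its Section 3 --- whereas \cite{GP} instead absorbs the cusp singularities into a Poincar\'e-type reference metric from the outset and only regularizes the conic part. Either route works; the cone-approximation route buys you a single regularization mechanism at the cost of having to track uniformity as the angle degenerates, which is precisely the content of Theorem 3.1 here. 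The one place your sketch is genuinely incomplete rather than merely compressed is the uniform second-order estimate: the Chern--Lu argument requires the regularized conic model metrics to have holomorphic bisectional curvature bounded above \emph{uniformly in} $\epsilon$, which holds only for a carefully chosen smoothing $\chi_\epsilon(|s_i|_{h_i}^2)$ and, for general cone angles, needs the refined curvature estimates of \cite{CGP} and \cite{GP} rather than a generic comparison; you correctly identify this as the technical heart but do not supply it. Two minor points: Ko\l odziej's estimate is not needed for the regularized equations (the right-hand side is smooth and positive, so Aubin--Yau already gives the $C^0$ bound), and the lower bound in item $(ii)$ should be read as $-\log\log^2\prod_{i=1}^{s}|s_i|^2_{h_i}-C\leq \phi_{mix}$, i.e.\ the cusp barrier lives along $\Dlc$ (indices $1,\dots,s$), as your barrier construction correctly assumes despite the index range printed in the statement.
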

\begin{remark}\label{twisted}
It is clear that if we assume that $\gamma$ is a smooth semi-positive form on $X$ and $K_X+D+\{\gamma\}$ is K\"ahler, we may obtain the twisted K\"ahler-Einstein metric by solving the following equation:
\begin{equation}\label{e:twisted} (\tilde\omega+\gamma+\ddbar\phi_{mix})^n=\frac{e^{\phi_{mix}}\omega^n}{\prod_{i=1}^s|s_i|_{h_i}^{2}\prod_{i=s+1}^r|s_i|_{h_i}^{2-2\beta_i}}.\end{equation}
\end{remark}
For later purpose, we do not assume $\tilde\omega$ is  K\"ahler here. However, by assumption, the class $[\tilde\omega]$ is K\"ahler, we may still solve the Monge-Ampere equation \eqref{e:mixMA}. Also our equation \eqref{e:mixMA} related to K\"ahler-Einstein metric  is slightly different from the equation in \cite{GP}  since we use different reference form $\tilde\omega$. In \cite{GP}, they used the Poincare type reference metric $\tilde\omega+\sum_{i=s+1}^r\ddbar\log\log^2\prod_{i=s+1}^r|s_i|_{h_i}$.
\subsection{Guenancia's positivity result} In this subsection,  we recall the result of Guenancia \cite{G} which in turn developed the results of \cite{Schu} (see also \cite {Tsu}) for the absolute case (no boundary divisor $D$). 

Now let us set-up the question more precisely.
Let $\pi:\mathcal X \to Y$ be a holomorphic surjective map between two compact K\"ahler manifolds $\mathcal X$ and $Y$, and let $D=\sum_{i=1}^r D_i$ be a reduced divisor on $\mathcal X$ with generically simple normal crossings and mapping surjectively to $Y$ by $\pi$. We denote by $\mathcal{S} \subset Y$ the minimal analytic subset of $Y$ such that if $\mathcal X_0:=\pi^{-1}(Y\setminus\mathcal{S})$, then every fiber $\mathcal X_y$ of $\pi_{|\mathcal X_0}$ is smooth, $D_{|\mathcal X_y}$ has simple normal crossings (and therefore is transverse to $\mathcal X_y$).  

Then we explain how  a smooth $(1,1)$-form $\omega$ on $\mathcal X$ can induce a singular metric on the bundle
$K_{\mathcal X/Y}$ by following \cite{Paun}. Fix a smooth $(1,1)$-form $\omega$ on $\mathcal X$, whose restriction to the
fibers of $\pi$ is K\"ahler. Then $\omega$ induces a metric on the bundle
$K_{\mathcal X/Y}$ as follows: Let $x\in \mathcal X$ be a point, and let $U$ be a coordinate set of $\mathcal X$ centered at
$x$. We denote by $z_1
,\cdots, z_{n+d}$ a coordinate system on $U$, and we equally
introduce $t_1
,\cdots, t_d$
coordinates near the point $y=p(x)$. This data
induces a trivialization of the relative canonical bundle, with respect
to which the weight of the metric we want to introduce is given by the
function $\Phi_U$, defined by the equality
\[\omega^n\prod_{j=1}^n\sqrt{-1}\pi^*(dt\wedge d\bar t_j)=e^{\Phi_U}\prod_{j=1}^{n+d}\sqrt{-1}(dz_j\wedge d\bar z_j)\]

The functions $\Phi_U$ glue together as weights of a globally
defined metric denoted by 
$h^\omega_{\mathcal X/Y}$ on the relative canonical bundle, the
corresponding curvature form is simply $\ddbar\Phi_U$. Since the weight $\Phi_U$ may have a log pole when the Jacobian of $\pi$ has zero,  the resulting metric $h^\omega_{\mathcal X/Y}$ will be identically $+\infty$ and hence $\ddbar\Phi_U$ is globally defined on $\mathcal X$ as a current.

Assume that for a generic $y\in Y$ and a set of numbers $\beta_1, \ldots, \beta_r \in [0,1)$, the cohomology class 
$$c_1(K_{\mathcal X_y}+\sum_{i=1}^r (1-\beta_i){D_i}_{|{\mathcal X_y}})+\gamma_y$$
is K\"ahler. Set $$\tilde\omega:=Ric(h^\omega_{\mathcal X/Y})+\sum_{i=1}^r (1-\beta_i){\theta_i}_{|{\mathcal X_y}}+\gamma_y.$$By Theorem \ref{t:GP-existence} and Remark \ref{twisted}, there exists on each such fiber $\mathcal X_y$ a unique (twisted)  K\"ahler-Einstein metric $$\rho_y:=\tilde\omega_y+\ddbar\phi_y$$ with mixed singularity along ${D_i}_{|\mathcal X_y}$ satisfying:
\begin{equation}
\label{KE}
\Ric \rho_{y} = -\rho_{y}+ \sum_{i=1}^r(1-\beta_i) [{D_i}_{|\mathcal X_y}]+\gamma_y,
\end{equation}
over $\mathcal X_y$ by solving Monge-Ampere equation \eqref{e:mixMA}.

To study the variation property of the  K\"ahler-Einstein metrics, we  glue the fiberwise  metrics $\rho_{y}$ to get a current $\rho\in c_1(K_{\mathcal X/Y}+ \sum_{i=1}^r(1-\beta_i) D_i)+\{\gamma\}$ with locally bounded potentials, i.e.
\begin{equation}\label{e:rhodef}
\rho:=\tilde\omega+\ddbar\phi,
\end{equation}
where $\ddbar$ is taken over $\mathcal X$. However, we remark that without any regularity of $\phi$ been obtained,  we do not justify $\ddbar\phi$ is well defined even on $\mathcal X_0:=\pi^{-1}(Y\setminus \mathcal S)$. In the end, we will see that $\phi$ is a quasi PSH function and we do not obtain any higher regularity of $\phi$ although some mild improvement is possible.

In the conic set-up (all $\beta_i>0$), Guenancia \cite{G} confirmed that $\rho$ is positive on $\mathcal X_0$ and can be extended to $\mathcal X$ as a positive current.

\vspace{4mm}
\begin{theorem}[Guenancia]\label{t:positivity}
Let $\pi:\mathcal X\to Y$ a holomorphic surjective map between compact Kähler manifolds, $D=\sum_{k=1}^r D_k$ a reduced divisor with generically simple normal crossings, $\{\gamma\}\in H^{1,1}(\mathcal X,\R)$ a semipositive class and $\beta_1, \ldots, \beta_r \in (0,1)$ such that the cohomology class $c_1(K_{\mathcal X_y}+\sum_{i=1}^r (1-\beta_i){D_i}_{|{\mathcal X_y}})+\{\gamma\}$ is K\"ahler for every  $y\in Y\setminus\mathcal{S} $. Then the following holds:
\begin{enumerate}
\item $\rho$ is positive
\item  $\rho$ is bounded outside $D$
\item  $\rho$ extends to $\mathcal X$ as a closed positive current in $ c_1(K_{\mathcal X/Y}+ \sum_{i=1}^r(1-\beta_i) D_i)+\{\gamma\}$.
\end{enumerate}
In particular, the cohomology class $ c_1(K_{\mathcal X/Y}+ \sum_{i=1}^r(1-\beta_i) D_i)+\{\gamma\}$  is pseudoeffective. 
\end{theorem}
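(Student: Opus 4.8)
The plan is to follow Schumacher's strategy: compute the \emph{geodesic curvature} of the relative Kähler-Einstein metric, show it is nonnegative on the smooth locus by a maximum principle, and then extend the resulting positive form across the singular locus as a closed positive current. Since $\rho=\tilde\omega+\ddbar\phi$ restricts on each smooth fiber to the Kähler metric $\rho_y$, its positivity as a $(1,1)$-form on $\mathcal X_0$ is equivalent, after choosing a frame adapted to $\pi$ and forming the Schur complement against the positive-definite vertical block $(\rho_{i\bar j})$, to the positivity of the geodesic curvature
\[
c(\phi):=\rho_{t\bar t}-\rho_{i\bar t}\,\rho^{i\bar j}\,\rho_{t\bar j},
\]
where $\rho^{i\bar j}$ is the inverse fiber metric. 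It suffices to treat a single base coordinate $t$ (i.e.\ $\dim Y=1$), the general case following by checking each horizontal direction. Note that away from $D$ the Einstein equation \eqref{KE} reads $\Ric\rho_y=-\rho_y$, since the current $[D_i|_{\mathcal X_y}]$ is supported on $D$.

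The key is that $\rho_y$ is Kähler-Einstein. Differentiating the fiberwise Monge-Amp\`ere equation \eqref{e:mixMA} twice in $t$ and using $\Ric\rho_y=-\rho_y$ yields Schumacher's fundamental identity on $\mathcal X_y\ssm D$,
\[
(\Delta_{\rho_y}-1)\,c(\phi)=-\,|\bar\partial\ve|^2_{\rho_y}\;\leq\;0,
\]
where $\Delta_{\rho_y}=\rho^{i\bar j}\partial_i\partial_{\bar j}$ is the fiber Laplacian and $\ve$ is the restriction to $\mathcal X_y$ of the horizontal lift of $\partial_t$, so that $\bar\partial\ve$ represents the Kodaira-Spencer class. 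The favorable sign of the zeroth-order term is decisive: if $c(\phi)$ attained a negative interior minimum at $p$, then $\Delta_{\rho_y}c(\phi)(p)\geq 0$ and $c(\phi)(p)<0$ would give $(\Delta_{\rho_y}-1)c(\phi)(p)>0$, contradicting the identity. Hence $c(\phi)\geq 0$ and $\rho\geq 0$ — provided a maximum principle is legitimately available.

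The main obstacle is precisely that the fibers carry conic (not smooth) metrics along $D$, so $\mathcal X_y\ssm D$ is non-compact and incomplete and the compact maximum principle does not apply directly; this is the gap between Schumacher's smooth case and the present statement. I would close it, following Guenancia, by regularization: replace the weights $|s_i|_{h_i}^{2(1-\beta_i)}$ in \eqref{e:mixMA} by $(|s_i|_{h_i}^2+\epsilon^2)^{1-\beta_i}$ and solve the resulting Monge-Amp\`ere equation, whose solution $\phi_\epsilon$ is smooth on the whole compact fiber, producing a genuine smooth Kähler metric $\omegate=\tilde\omega+\ddbar\phi_\epsilon$. Its Ricci form then satisfies $\Ric\,\omegate=-\omegate+\sum_i(1-\beta_i)\ddbar\log(|s_i|_{h_i}^2+\epsilon^2)+(\text{smooth curvature})$, where the regularizing term is a smoothing of the positive current $\sum_i(1-\beta_i)[D_i]$. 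Carrying out Schumacher's computation for $\omegate$ gives $(\Delta_{\omegate}-1)c(\phi_\epsilon)=-|\bar\partial\ve|^2_{\omegate}-E_\epsilon$, and the approximate positivity of the regularizing term near $D$ lets one control $E_\epsilon$ so that the (now compact) minimum principle yields $c(\phi_\epsilon)\geq-\delta(\epsilon)$ with $\delta(\epsilon)\to 0$. The hardest part is then the limit $\epsilon\to 0$: one must establish Laplacian/$C^2$ and gradient a priori estimates of Aubin-Yau type, uniform in $\epsilon$ and locally uniform on $\mathcal X_0\ssm D$, guaranteeing $\omegate\to\rho$ and $c(\phi_\epsilon)\to c(\phi)$ in $C^0_{\mathrm{loc}}$ away from $D$ and hence $c(\phi)\geq 0$ there.

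Finally, these uniform estimates also show $\rho$ has locally bounded potential away from $D$, which is the second bullet. For the last bullet, extend $\rho$ as a closed positive current to all of $\mathcal X$ in two steps: across $D\cap\mathcal X_0$, where the local potential $\phi$ is bounded above and bounded below up to a locally integrable conic weight (hence quasi-psh), and across the proper analytic set $\pi^{-1}(\mathcal S)=\mathcal X\ssm\mathcal X_0$, where $\phi$ stays bounded; in both cases the trivial/Skoda--El Mir extension of a closed positive current across an analytic set applies. By construction $\rho\in c_1(K_{\mathcal X/Y}+\sum_i(1-\beta_i)D_i)+\{\gamma\}$, so the extended current represents this class, which is therefore pseudoeffective.
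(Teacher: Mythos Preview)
The paper does not give its own proof of this theorem; it is quoted as Guenancia's result \cite{G} and used as a black box in the proof of Theorem~\ref{t1}. So there is no ``paper's proof'' to compare against directly. That said, your sketch is precisely the strategy Guenancia carries out in \cite{G}: Schumacher's geodesic-curvature identity $(\Delta_{\rho_y}-1)c(\phi)=-|\bar\partial v|^2$, regularization of the conic weights $|s_i|^{2(1-\beta_i)}\rightsquigarrow(|s_i|^2+\epsilon^2)^{1-\beta_i}$ to recover compactness of the fiber, a minimum principle on the smooth approximant, and uniform $C^2$/Laplacian estimates to pass to the limit. The first two bullets are indeed obtained this way.

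One genuine gap in your write-up is the extension across the singular fibers $\pi^{-1}(\mathcal S)$. You assert that ``$\phi$ stays bounded'' there and invoke Skoda--El~Mir, but this boundedness is not automatic and is in fact the nontrivial step. What is needed is an \emph{upper} bound on the local potential that is uniform as $y\to\mathcal S$; the fiberwise estimates you have established are locally uniform only on compact subsets of $Y\setminus\mathcal S$. In \cite{G} (and in this paper's Lemma in \S4 for the mixed case) this is done via P\u aun's argument: write the fiberwise Monge--Amp\`ere equation in a Stein coordinate chart, use Demailly's approximation $\varphi_y=\lim_{m}\sup_{f\in H_{m,y}}\frac1m\log|f|$, and apply an $L^{2/m}$ Ohsawa--Takegoshi extension together with H\"older's inequality to bound $|f(x_0)|^{2/m}$ by a constant depending only on the chart and $\sup G$ (the latter being bounded above because the Jacobian of $\pi$ only acquires zeros, not poles). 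Without this step your extension argument is incomplete.
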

\vspace{4mm}

\section{Cone to cusp convergence} In this section, we recall the result of \cite{G2}. As mentioned in the introduction, the main idea to prove Theorem \ref{t1} is investigating the metric convergence in suitable sense when cone angle goes to zero. Such kind of convergence on a fixed smoot fiber is  studied by Guenancia in \cite{G2}, which we recall below. (See also an interesting K\"ahler Ricci flow analogue in \cite{LZ}.)

We fix a log smooth log canonical pair $(X,D=\sum_{i=1}^{s} D_i+\sum_{i=s+1}^{r}(1-\beta_i)D_i)$ and a smooth semi-positive form $\gamma$ such that $K_X+D+\{\gamma\}>0$. Then for small  $0<\delta_i<\delta_0,1\leq i\leq s$, where $\delta_0$ is small constant, we have \begin{equation}K_X+\sum_{i=1}^{s}(1-\delta_i)D_i+\sum_{i=s+1}^{r}(1-\beta_i)D_i+\{\gamma\}>0.\end{equation} 

\noindent Fix a set of positive constants $\delta=(\delta_1,\delta_2,...\delta_s)>0$, by \cite{GP}, there are twisted conic K\"ahler-Einstein metrics $\rho_{\delta}$ with  cone angle $2\pi \delta_i$ along $D_i, 1\leq i\leq s$ and cone angle $2\pi\beta_i$ along $D_i, s<i\leq r$ by solving the following equation:
\begin{equation}\label{e:appro} (-\Ric(\omega)+\Ric(D)-\delta\theta+\gamma+\ddbar\phi_\delta)^n=\frac{e^{\phi_\delta}\omega^n}{\prod_{i=1}^s|s_i|_{h_i}^{2-2\delta_i}\prod_{i=s+1}^r|s_i|_{h_i}^{2-2\beta_i}},\end{equation}
where $\delta\theta:=\sum_{i=1}^{s}\delta_i\theta_i$ and $\omega$ is a K\"ahler metric on $X$.

\noindent In local coordinate, $\rho_{\delta}$ is quasi-isometric to the cone metric:
\[\omega_{\rm mod} =\sum_{i=1}^{s}\frac{i dz_i \wedge d \bar z_i}{|z_i|^{2(1-\delta_i)} }+ \sum_{i=s+1}^{r}\frac{i dz_i \wedge d \bar z_i}{|z_i|^{2(1-\beta_i)} } + \sum_{k=1}^n i dz_k \wedge d \bar z_k\]

\noindent So we have a family of metrics $(\rho_\delta)_{0< \delta<\delta_0}$ on $X\setminus D$  satisfying the twisted K\"ahler-Einstein equation:
\[\Ric \rho_\delta = - \rho_\delta + \sum_{i=s+1}^r(1-\beta_i) [D_i]+\sum_{i=1}^s(1-\delta_i)[D_i]+\gamma.\]

\noindent  Guenancia proves the following convergence result for $\rho_\delta$, when $\delta\rightarrow 0$.
\begin{theorem}[Guenancia]\label{t:conetocusp}Consider equations \eqref{e:appro}. When $\delta\to 0$, 
the twisetd conic K\"ahler-Einstein metric $\rho_\delta:=-\Ric(\omega)+\Ric(D)+\gamma-\delta\theta+\ddbar\phi_\delta$  converge to the twisted K\"ahler-Einstein metric $\rho_{mix}$ with mixed singularities, both in the weak topology of currents and in the $\mathscr C_{\rm loc}^{\infty}(X\smallsetminus D)$ topology. Moreover, we have 
\begin{equation}\label{c0}-\sum_{i=1}^r\log\log^2|s_i|_{h_i}-C\leq\phi_\delta\leq C \end{equation} for some $C$ independent of $\delta$.
\end{theorem}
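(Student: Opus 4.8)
The plan is the standard three-step scheme for a degenerating family of complex Monge--Amp\`ere equations: establish a priori estimates that are \emph{uniform in} $\delta$, extract a limit along the family using compactness, and identify it with $\phi_{mix}$ by uniqueness. Throughout set $\theta_\delta:=-\Ric(\omega)+\Ric(D)-\delta\theta$ and $f_\delta:=\bigl(\prod_{i=1}^s|s_i|_{h_i}^{2-2\delta_i}\prod_{i=s+1}^r|s_i|_{h_i}^{2-2\beta_i}\bigr)^{-1}$, so that \eqref{e:appro} reads $(\theta_\delta+\ddbar\phi_\delta)^n=e^{\phi_\delta}f_\delta\,\omega^n$. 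Two remarks drive the argument: $\theta_\delta\to\tilde\omega$ smoothly and $f_\delta\geq c_0>0$ on $X$ with $f_\delta\to f_0$ in $\mathscr C^\infty_{\rm loc}(X\ssm D)$; and, since $[\theta_\delta]$ stays K\"ahler for small $\delta$, there is a smooth $\psi_0$ and $\delta_0>0$ with $\theta_\delta+\ddbar\psi_0\geq\omega'$ for a fixed K\"ahler form $\omega'$ and all $\delta<\delta_0$.

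\emph{Step 1: uniform $C^0$ bounds.} The upper bound comes from comparison with the fixed supersolution $\bar\phi:=\psi_0+C_0$: for $C_0$ large one has $(\theta_\delta+\ddbar\bar\phi)^n=(\theta_\delta+\ddbar\psi_0)^n\leq C_1\omega^n\leq e^{\bar\phi}f_\delta\,\omega^n$ (using $f_\delta\geq c_0$), so the comparison principle for the operator $u\mapsto(\theta_\delta+\ddbar u)^n-e^{u}f_\delta\omega^n$ gives $\phi_\delta\leq\bar\phi\leq C$ uniformly in small $\delta$. For the lower bound one builds the subsolution
\[
\underline\phi:=-\sum_{i=1}^s\log\log^2|s_i|_{h_i}+\sum_{i=s+1}^r A_i\,|s_i|_{h_i}^{2\beta_i}+B\,\psi_0-C_0,
\]
with $A_i,B,C_0$ large and independent of $\delta$. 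Along $D_i$ with $i\leq s$ the first sum produces a Poincar\'e-type form $\gtrsim\frac{i\,dz_i\wedge d\bar z_i}{|z_i|^2\log^2|z_i|^2}$, which dominates the cone density $|z_i|^{-2(1-\delta_i)}$ because $|z_i|^{-2\delta_i}/\log^2|z_i|^2\to\infty$; along $D_i$ with $i>s$ the term $A_i|s_i|_{h_i}^{2\beta_i}$ produces a cone-type form $\gtrsim|z_i|^{-2(1-\beta_i)}$ matching the klt density; and $B\ddbar\psi_0$ keeps $\theta_\delta+\ddbar\underline\phi>0$. A pointwise comparison of leading terms then gives $(\theta_\delta+\ddbar\underline\phi)^n\geq e^{\underline\phi}f_\delta\,\omega^n$ for all $\delta<\delta_0$, and since $\underline\phi-\phi_\delta$ is bounded above (it $\to-\infty$ along $D$) the comparison principle applies on $X\ssm D$ and yields $\phi_\delta\geq\underline\phi\geq-\sum_{i=1}^r\log\log^2|s_i|_{h_i}-C$, which is exactly \eqref{c0}.

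\emph{Steps 2--3: higher order estimates away from $D$, and passage to the limit.} Fix $K\Subset X\ssm D$. On a neighbourhood $K'$ of $K$ the data $\theta_\delta,f_\delta$ and the background potential are bounded in every $\mathscr C^k$-norm uniformly in $\delta$, and $\|\phi_\delta\|_{L^\infty(K')}$ is bounded by Step 1; a localized Aubin--Yau/Chern--Lu second-order estimate then bounds $\tr_\omega\rho_\delta$ on $K$, Evans--Krylov upgrades this to a uniform $\mathscr C^{2,\alpha}(K)$ bound, and differentiating the equation and iterating Schauder estimates gives uniform $\mathscr C^k(K)$ bounds for every $k$. By Arzel\`a--Ascoli and a diagonal argument over an exhaustion of $X\ssm D$ by compacts, every sequence $\delta_j\to0$ has a subsequence with $\phi_{\delta_j}\to\phi_\infty$ in $\mathscr C^\infty_{\rm loc}(X\ssm D)$; passing \eqref{e:appro} to the limit, $\phi_\infty$ solves $(\tilde\omega+\ddbar\phi_\infty)^n=e^{\phi_\infty}f_0\,\omega^n$ on $X\ssm D$, and the bounds of Step 1 survive in the limit, so $\phi_\infty$ obeys the normalization of Theorem \ref{t:GP-existence}(ii). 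By the uniqueness part of Theorem \ref{t:GP-existence}, $\phi_\infty=\phi_{mix}$; as this is independent of the chosen subsequence, the whole family converges, $\phi_\delta\to\phi_{mix}$ in $\mathscr C^\infty_{\rm loc}(X\ssm D)$. Finally, the uniform two-sided bound of Step 1 has an $\omega^n$-integrable lower term, so dominated convergence gives $\phi_\delta\to\phi_{mix}$ in $L^1(X)$ and hence $\rho_\delta=\theta_\delta+\ddbar\phi_\delta\to\tilde\omega+\ddbar\phi_{mix}=\rho_{mix}$ weakly as currents.

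\emph{Main obstacle.} The whole difficulty is concentrated in the uniform $C^0$ estimate, and in particular its lower bound: $f_\delta$ lies in $L^p$ only for $p<(1-\max_i\delta_i)^{-1}$, which tends to $1$ as $\delta\to0$, so Ko\l odziej-type $L^\infty$ estimates degenerate and one truly cannot bound $\phi_\delta$ below by a constant --- it must be allowed to go to $-\infty$ like $-\log\log^2|s_i|_{h_i}$ near each component that is turning into a cusp. Constructing a single subsolution compatible at once with the fixed cone singularities and with the forming cusps, and whose Monge--Amp\`ere mass beats $e^{\underline\phi}f_\delta$ uniformly in $\delta$, is the technical heart; once it is available the remaining steps are routine.
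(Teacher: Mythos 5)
The paper does not actually prove this theorem: it is quoted from \cite{G2}, and the only ingredient the paper establishes itself is the family version of the $C^0$ estimate, Lemma \ref{uni}. Measured against that, your Step 1 is essentially the paper's own argument --- the same Poincar\'e barrier $-\sum\log\log^2|s_i|^2_{h_i}$, the same maximum-principle comparison at the extremum of $\phi_\delta+\sum\log\log^2|s_i|^2_{h_i}$, and the same observation that the degenerating cone densities are dominated uniformly because $|z_i|^{2\delta_i}\leq 1$ --- and your overall scheme (uniform $C^0$ bound with cusp-type barrier, higher-order bounds away from $D$, compactness plus uniqueness from Theorem \ref{t:GP-existence}, then $L^1$ convergence of the potentials to get weak convergence of currents) is the one carried out in \cite{G2}. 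You also correctly absorb the fact that the reference form is only cohomologically K\"ahler via the fixed potential $\psi_0$, which is exactly the content of the paper's remark after the theorem.

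The one step that does not survive scrutiny as written is the second-order estimate. You claim that a ``localized Aubin--Yau/Chern--Lu estimate'' bounds $\tr_{\omega}\rho_\delta$ on $K\Subset X\ssm D$ using only the uniform $L^\infty(K')$ bound on $\phi_\delta$ and the smoothness of the local data. Interior $C^2$ estimates of this kind are false for the complex Monge--Amp\`ere equation: there are bounded plurisubharmonic solutions of $(\ddbar u)^n=f\,\omega^n$ with $f$ smooth and strictly positive that fail to be $C^2$ in the interior (Pogorelov/B\l ocki-type examples), so no purely local argument from the $C^0$ bound can close this step. What does work, and what \cite{G2} does, is a global argument on $X\ssm D$: the Chern--Lu inequality $\Delta_{\rho_\delta}\log\tr_{\rho_\delta}\omega'\geq -1-C\,\tr_{\rho_\delta}\omega'$ (using $\Ric\rho_\delta=-\rho_\delta$ and the curvature bound of a fixed smooth K\"ahler $\omega'$), applied to $\log\tr_{\rho_\delta}\omega'-A\bigl(\phi_\delta+\sum\log\log^2|s_i|^2_{h_i}\bigr)$, which tends to $-\infty$ along $D$ precisely because of the barrier, so its maximum is attained in the interior; the positivity $\theta_\delta-\ddbar\sum\log\log^2|s_i|^2_{h_i}\geq\epsilon\,\omega'$ established in Lemma \ref{uni} then closes the maximum principle and yields $\rho_\delta\geq c(x)\,\omega'$ with $c$ degenerating only near $D$, after which the volume-form equation gives the two-sided metric bound on $K$ and Evans--Krylov/Schauder proceed as you describe. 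So the gap is repairable with ingredients you already have on the table, but the Laplacian estimate must be run globally with the barrier, not locally. (A cosmetic point: the integrability threshold for $f_\delta$ is governed by $\min_i\delta_i$, not $\max_i\delta_i$; your conclusion that Ko\l odziej-type bounds degenerate as $\delta\to0$ is unaffected.)
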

Note that in \cite{G2}, the reference form is K\"ahler and here $-\Ric(\omega)+\Ric(D)+\{\gamma\}$ is not necessary  K\"ahler but only in a K\"ahler class. So the solutions associated with different reference forms will differ by a smooth function on $X$, which is independent of $\delta$. 

Now we consider a smooth fibration and obtain  suitable uniform control of $\phi_\delta$ also in the base variable $t\in Y$. The following lemma is a generalization of  \cite[Lemma 2.1]{G2}. 
\begin{lemma}\label{uni} Recall that the fibration $\pi:(\mathcal X_0,D)\to Y_0$ is smooth when restricted on $Y_0:= Y\setminus \mathcal S$.  Fix a compact set $K\subset Y_0$, then there is a constant $C$ independent of both $t\in K$ and $\delta<\delta_0$, such that 
\begin{equation}\label{c00}-\sum_{i=1}^r\log\log^2|s_i|_{h_i}-C\leq\phi_{y,\delta}\leq C, \end{equation} 
where $\phi_{y,\delta}$ is the solution to \eqref{e:appro} on fiber $\mathcal X_y.$
\end{lemma}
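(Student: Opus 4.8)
The plan is to adapt the single-fiber argument of \cite[Lemma 2.1]{G2} to the relative setting by running the maximum principle \emph{fiberwise} and then checking that every auxiliary constant that enters can be chosen uniformly over the compact set $K\subset Y_0$. First I would set up the reference geometry: over a neighborhood of $K$ the family $\pi:(\mathcal X_0,D)\to Y_0$ is a smooth locally trivial fibration with simple normal crossing boundary, so we may cover $\pi^{-1}(K)$ by finitely many coordinate charts adapted to $D$ in which the metrics $\omega$, $h_i$, and the curvature forms $\theta_i$ have all their relevant $C^k$-norms bounded independently of $t$. In particular the quantity $\tilde\omega_t=-\Ric(\omega)+\Ric(D)$, its $\delta$-perturbation $\tilde\omega_{t,\delta}=\tilde\omega_t-\delta\theta$, and the right-hand side density in \eqref{e:appro} are all uniformly comparable, over $t\in K$ and $0<\delta<\delta_0$, to fixed model objects. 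This reduces the statement to a uniform estimate for the solution $\phi_\delta$ of the Monge--Amp\`ere equation \eqref{e:appro} on each fiber $\mathcal X_t$.

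For the \textbf{upper bound} $\phi_\delta\le C$ I would run the maximum principle on the fiber: at a point where $\phi_\delta$ restricted to $\mathcal X_t$ attains its maximum one has $\tilde\omega_{t,\delta}+\ddbar\phi_\delta\le \tilde\omega_{t,\delta}$ in the fiber directions, hence $(\tilde\omega_{t,\delta}+\ddbar\phi_\delta)^n\le \tilde\omega_{t,\delta}^n$ pointwise there, and comparing with the right-hand side of \eqref{e:appro} gives $e^{\phi_\delta}\le \tilde\omega_{t,\delta}^n/\omega^n\cdot \prod|s_i|_{h_i}^{2-2\delta_i}\prod|s_i|_{h_i}^{2-2\beta_i}$ at that point; since the defining sections $|s_i|_{h_i}$ are bounded above on the (relatively compact) charts and the exponents stay in a fixed range, and $\tilde\omega_{t,\delta}^n/\omega^n$ is uniformly bounded by the previous paragraph, we get $\phi_\delta\le C$ with $C$ depending only on $K$. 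The subtlety here is that $\phi_\delta$ is not smooth up to $D$, but this is handled exactly as in \cite{G2,GP}: one works on $X_0$ where $\phi_\delta$ is smooth and uses that $\phi_\delta+\sum\log\log^2|s_i|_{h_i}$ (or the appropriate combination) tends to $-\infty$ near $D$, so the supremum is attained in the interior.

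For the \textbf{lower bound} I would use the comparison function $\psi:=-\sum_{i=1}^r\log\log^2|s_i|_{h_i}$ and show that $\phi_\delta-\psi$ is bounded below uniformly. Setting $u_\delta:=\phi_\delta-\psi$, a direct computation (again uniform in $t$ and $\delta$ by the choice of charts, since $\ddbar\psi$ and its behavior near $D$ are model) shows that $u_\delta$ satisfies a Monge--Amp\`ere inequality of the form $(\omega_{P}+\ddbar u_\delta)^n\ge e^{u_\delta}\cdot(\text{bounded positive density})\cdot\omega^n$ with respect to a fixed Poincar\'e-type reference metric $\omega_P$ whose total volume on each fiber is uniformly bounded; then the maximum principle at an interior minimum of $u_\delta|_{\mathcal X_t}$ forces $e^{u_\delta}\le C$, i.e.\ $\phi_\delta\ge\psi-C$. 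I expect the \textbf{main obstacle} to be exactly this lower bound: one must verify that the Poincar\'e-type change of variable is uniform along the family --- that the constant relating $\omega_P^n$ to the model density near $D$, the volume $\int_{\mathcal X_t}\omega_P^n$, and the lower bound on the twisted density do not degenerate as $t$ approaches $\partial K$. This follows from the local triviality of the fibration over a neighborhood of $K$ together with the explicit quasi-isometry of $\rho_\delta$ to the cone model stated after \eqref{e:appro}, but it requires care in patching the local model estimates into a global fiberwise one, which is the technical heart of the argument; with that in place the two bounds combine to give \eqref{c00}.
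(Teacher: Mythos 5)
Your proposal follows essentially the same route as the paper: maximum principle for the upper bound, comparison with the Poincar\'e-type potential $-\sum_i\log\log^2|s_i|^2_{h_i}$ for the lower bound, and compactness of $K$ plus local uniformity of the reference data to make all constants independent of $t$ and $\delta$ (the paper additionally makes the reference form genuinely K\"ahler near each $t_0$ by adding a bounded $\ddbar f$, a point you subsume into ``uniform comparability,'' and its technical core is exactly the uniform lower bound on $(\bar\omega-\ddbar\sum\log\log^2|s_i|^2_{h_i})^n$ that you identify). One small slip: at the interior minimum of $u_\delta$ the maximum principle should yield $e^{u_\delta}\geq c>0$ (whence $\phi_\delta\geq\psi-C$), not $e^{u_\delta}\leq C$.
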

\begin{proof}We sketch the proof. Fix a point $y\in K$, by compactness, it is clear that it suffices to prove the desired estimates in a neighbourhood of $y$. Then we pick a smooth function $f$ on $\mathcal X_{y}$ such that $$-\Ric(\omega)+\Ric(D)+\gamma+\ddbar f>0 \quad \textnormal{on}\,\,\, \mathcal X_{y}.$$
Do an arbitrary smooth extension of $f$ to $\mathcal X_0$, which is still denoted by $f$. Note that $-\Ric(\omega)+\Ric(D)+\gamma$ is defined in a neighbourhood of  $\mathcal X_{y}$, so by continuity it is clear that  $-\Ric(\omega)+\Ric(D)+\gamma+\ddbar f>0$ on $\pi^{-1}(U)$, where $U$ is a small neighbourhood of $y$. Since $f$ is bounded on $\pi^{-1}(U)$, without lose of generality, we may assume that the reference form $-\Ric(\omega)+\Ric(D)+\gamma-\delta\theta$ in \eqref{e:appro} is K\"ahler, for sufficiently small $\delta$,  by forgetting $f$.

 To release the notation, we also omit the parameter $y$ in $\phi_{y,\delta}$ in the rest of proof. Now the proof follows from standard maximum principle argument. We have the following upper bound $$\phi_\delta\leq\log\frac{(-\Ric(\omega)+\Ric(D)+\gamma-\delta\theta)^n}{\prod_{i=1}^s|s_i|_{h_i}^{2\delta_i-2}\prod_{i=s+1}^r|s_i|_{h_i}^{2\beta_i-2}\omega^n}\leq C.$$
Now we derive the lower bound:
\begin{equation}\phi_\delta+\sum_{i=1}^r\log\log^2|s_i|^2_{h_i}\geq\log\frac{(-\Ric(\omega)+\Ric(D)+\gamma-\delta\theta-\ddbar\sum_{i=1}^r\log\log^2|s_i|^2_{h_i})^n\prod_{i=1}^r\log^2|s_i|^2_{h_i}}{\prod_{i=1}^s|s_i|_{h_i}^{2\delta_i-2}\prod_{i=s+1}^r|s_i|_{h_i}^{2\beta_i-2}\omega^n}.\end{equation}
We claim that the right hand term above is uniformly bounded below. For this, without lose of generality we may assume that all $\delta_i,\beta_i$ are zero. Under this assumption, we compute as follows: firstly, set the $1$-$1$ form on $\mathcal X_0$  $$\bar\omega:=-\Ric(\omega)+\Ric(D)+\gamma-\delta\theta,$$  whose restriction to a fiber is K\"ahler. By scaling of the metric $h_i$, we may assume that $|s_i|_{h_i}<<1$. Then the following holds on any fiber $\mathcal X_y, $ when $y\in K$ 
\begin{equation}\label{Aa}(\bar\omega-\ddbar\sum_{i=1}^r\log\log^2|s_i|^2_{h_i})^n\geq (\frac{1}{2}\bar\omega+\sum_{i=1}^ri\frac{\partial|s_i|^2_{h_i}\wedge\bar\partial|s_i|^2_{h_i}}{|s_i|^4_{h_i}\log^2|s_i|^2_{h_i}})^n.\end{equation}
Again by compactness, it suffices to show that a positive lower bound exists on a coordinates chart $(t,z_1,...z_n)$, where $t$ is the coordinate on base, and $z_i$ are the coordinates on the fiber. Without lose of generality, we focus on the region where all $|z_i|,i=1...r$ are much smaller than the  smooth metric $h_i$. Then the right hand side of \eqref{Aa} is bounded below as:
$$(\frac{1}{2}\bar\omega)^{n-r}\prod_{i=1}^r\frac{\partial|s_i|^2_{h_i}\wedge\bar\partial|s_i|^2_{h_i}}{|s_i|^4_{h_i}\log^2|s_i|^2_{h_i}}\geq C_K \frac{\omega^n}{\prod_{i=1}^r|s_i|^2_{h_i}\log^2|s_i|^2_{h_i}},$$
where $C_K$ is a constant depending on $K$. The lemma is proved.
\end{proof}
\section{Proof of theorem \ref{t1} and Theorem \ref{psef}}
In this section, we firstly prove Theorem \ref{t1} by using results in previous subsections and Theorem \ref{psef} will follow immediately. Recall that, the fiberwise K\"ahler-Einstein metric $\rho_{y,mix}$ with mixed singularity can be glued to a $(1,1)$- current $\rho_{mix}$ on $\mathcal X_0$. In the following lemma, by using cone deformation argument, we first show that $\rho_{mix}$ is a positive $(1,1)$-current for a smooth fibration $\pi:\mathcal X_0\to Y_0$. 
\begin{lemma}\label{l:conedeformation}$\rho_{mix}$ is positive $(1,1)$-current on $\mathcal X_0$. 
\end{lemma}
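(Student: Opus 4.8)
The plan is to realize $\rho_{mix}$ as a limit of the conic currents $\rho_\delta$ constructed by Guenancia's Theorem \ref{t:positivity}, and to transfer positivity through this limit. Since the paper assumes $\{\gamma\}=0$, I treat only the untwisted case; the general case is identical. For simplicity write $D = \sum_{i=1}^s D_i + \sum_{i=s+1}^r(1-\beta_i)D_i$ with $\beta_i\in(0,1)$ for $i>s$, so the "cone" components to be deformed are $D_1,\dots,D_s$. First I would fix a vector $\delta=(\delta_1,\dots,\delta_s)$ of small positive reals and replace each reduced component $D_i$ ($i\le s$) by $(1-\delta_i)D_i$. For $\delta<\delta_0$ the fiberwise class $c_1(K_{\mathcal X_y}+\sum_{i\le s}(1-\delta_i)D_i+\sum_{i>s}(1-\beta_i)D_i)$ stays K\"ahler on $Y_0$ (this is an open condition, shrinking $\delta_0$ if necessary so it holds on the compact $K$ we care about), so Theorem \ref{t:positivity} applies and yields a closed \emph{positive} current $\rho_\delta = \tilde\omega_\delta + \ddbar\phi_\delta$ on $\mathcal X$, lying in $c_1(K_{\mathcal X/\mathcal Y} + \sum_{i\le s}(1-\delta_i)D_i + \sum_{i>s}(1-\beta_i)D_i)$, whose fiberwise restriction solves the conic equation \eqref{e:appro}.

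Next I would let $\delta\to 0$ and identify the limit. The reference forms $\tilde\omega_\delta$ converge smoothly (on all of $\mathcal X$) to the reference form $\tilde\omega$ of the mixed equation \eqref{e:mixMA}, since they differ only by $\sum_{i\le s}\delta_i\theta_i$ which is $O(|\delta|)$ in $\mathscr C^\infty$. The key input is the uniform estimate: combining Lemma \ref{uni} (applied on a compact $K\subset Y_0$) with the fiberwise estimate of Theorem \ref{t:conetocusp}, the potentials $\phi_\delta$ satisfy $-\sum_{i=1}^r\log\log^2|s_i|_{h_i}-C\le\phi_\delta\le C$ with $C$ independent of $\delta$ and of $t\in K$. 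In particular $(\phi_\delta)$ is uniformly bounded above and locally uniformly $L^1$ on $\mathcal X_0$, hence — being $\tilde\omega_\delta$-psh modulo the uniformly bounded correction — forms a relatively compact family of quasi-psh functions. I would extract a subsequential limit $\phi_\infty$ and show $\tilde\omega + \ddbar\phi_\infty = \rho_{mix}$: on each fixed smooth fiber $\mathcal X_y$ ($y\in Y_0$), Theorem \ref{t:conetocusp} guarantees $\rho_\delta|_{\mathcal X_y}\to\rho_{y,mix}$ in $\mathscr C^\infty_{\rm loc}(\mathcal X_y\setminus D)$ and weakly as currents, and by uniqueness of the fiberwise mixed K\"ahler--Einstein metric this pins down $\phi_\infty$ fiberwise up to the ambient smooth normalization; so $\tilde\omega+\ddbar\phi_\infty$ is exactly the glued current $\rho_{mix}$ of \eqref{e:rhodef}. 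Since a weak limit of closed positive currents is closed and positive, $\rho_{mix}\ge 0$ on $\mathcal X_0$, which is the assertion of the lemma.

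The main obstacle I anticipate is the identification step — making sure that the global limit $\tilde\omega+\ddbar\phi_\infty$ really coincides with the current obtained by gluing the fiberwise mixed metrics, rather than some larger current that a naive weak limit might produce (e.g. picking up mass along $\mathcal S$ or along $D$). This is where the two-sided uniform bound on $\phi_\delta$ over compacta $K\subset Y_0$ is essential: the upper bound prevents loss of mass and the lower bound (with its $-\sum\log\log^2|s_i|$ singularity, which is the expected singularity of a mixed-cusp potential and is locally integrable) controls the behavior near $D$, so that on $\mathcal X_0$ the limit has locally bounded-above potential with the right fiberwise profile. A secondary, more routine point is checking that the cohomology class of $\rho_\delta$, namely $c_1(K_{\mathcal X/\mathcal Y}+\sum_{i\le s}(1-\delta_i)D_i+\sum_{i>s}(1-\beta_i)D_i)$, converges to $c_1(K_{\mathcal X/\mathcal Y}+\sum_i(1-\beta_i)D_i)$, which is immediate since $\delta_i\to 0$ and the Chern classes depend continuously (indeed affinely) on the coefficients; this also matches the reference-form convergence above and shows $\rho_{mix}$ sits in the asserted class once one passes, in the next step of the paper, from $\mathcal X_0$ to all of $\mathcal X$.
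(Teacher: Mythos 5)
Your proposal is correct and follows essentially the same route as the paper: perturb the reduced components to cone angle $\delta$, invoke Guenancia's positivity theorem for the conic currents $\rho_\delta$, use the fiberwise cone-to-cusp convergence together with the uniform two-sided potential bound of Lemma \ref{uni} to pass to the limit in $L^1_{\rm loc}$, and conclude positivity of $\rho_{mix}$ as a weak limit of positive currents. The only cosmetic difference is that the paper identifies the limit directly via dominated convergence on $\mathcal X_0\setminus D$ and then extends across $D$ by a Hartogs-type argument, whereas you fold the extension across $D$ into the $L^1_{\rm loc}$ convergence using the integrable $-\sum\log\log^2|s_i|_{h_i}$ lower bound; both are fine.
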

\begin{proof}

When $\delta_i>0,i=1,2,..s$, all cone angles $\delta_i$ are nonzero.  By Theorem \ref{t:positivity}, $\rho_{y,\delta_i}$ can be glued to a positive $(1,1)-$ current $\rho_{\delta_i}\geq 0$ on $\mathcal X_0$. 

Now we fixed a point $y\in Y_0$, then $\mathcal X_{y}$ is a smooth compact K\"ahler manifold. On  each fixed fiber $\mathcal X_{y}$, when $\delta_i\to 0$, by Theorem \ref{t:conetocusp},  $\rho_{y,\delta_i}$ converges smoothly to $\rho_{y,mix}$ outside $D|_{\mathcal X_{y}}$. This implies that $\rho_{\delta_i}$ converges pointwisely to $\rho_{mix}$ on $\mathcal X_0\setminus D$. By the uniform local potential estimate of $\rho_{\delta}$ in (\ref{c00})  and dominate convergence theorem, we deduce that the local potential of $\rho_\delta $ converges to the local potential $\rho_{mix}$ on $\mathcal X_0\setminus D$ in $L^1_{loc}$ sense and hence $\rho_{mix}$ is a positive $(1,1)-$ current on $\mathcal X_0$. Again, by the uniform upper bound of the local potential of $\rho_{mix}$ in \eqref{c00} and Hartogs extension theorem, we deduce that $\rho_{mix}$ is indeed positive on $\pi^{-1}(K)$, where $K$ is any compact subset in $Y_0$, and hence also positive on $\mathcal X_0$.
\end{proof}
Now we know that the fiberwise twisted  K\"ahler-Einstein metrics with mixed singularity induce a closed positive $(1,1)$-current  $\rho_{mix} \in c_1(K_{\mathcal X/Y}+\sum(1-\beta_k)D_k)_{|\mathcal X_0}+\{\gamma\}$ on $\mathcal X_0$. To complete the proof of Theorem \ref{t1}, we would like to show that the current $\rho_{mix}$ could be extended to $\mathcal X$ and hence it suffices to prove that the local potential of $\rho_{mix}$
 is bounded from above near the singular fiber. To do this, in the following lemma, we follow Paun's argument in \cite[\S 3.3]{Paun}.

\begin{lemma}\label{OT}$\rho_{mix}$ can be extended across the singular fiber to $\mathcal X$ as a positive current.
\end{lemma}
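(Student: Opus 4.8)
The plan is to reduce the extension problem to a local statement near a singular fiber and then invoke a Paun-type integral estimate together with the Skoda--El~Mir / Grauert--Riemenschneider type extension theorem for positive currents. We already know from Lemma~\ref{l:conedeformation} that $\rho_{mix}$ is a closed positive $(1,1)$-current on $\mathcal X_0=\pi^{-1}(Y\setminus\mathcal S)$ lying in the cohomology class $c_1(K_{\mathcal X/Y}+\sum(1-\beta_i)D_i)_{|\mathcal X_0}$, with local potential $\phi$. Since $\mathcal S$ is an analytic subset of the base and $\pi^{-1}(\mathcal S)$ is an analytic subset of $\mathcal X$ of codimension $\geq 1$, the standard way to extend a closed positive current across such a set is to show that the local potentials stay bounded from above (equivalently, that the current has locally finite mass) near $\pi^{-1}(\mathcal S)$; then the trivial extension by zero is closed and positive (by the support theorem for closed positive currents, a closed positive current with no mass on an analytic set of the right dimension extends). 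So the whole proof comes down to producing a \emph{uniform upper bound} for $\phi_\delta$, and hence for $\phi=\lim_{\delta\to 0}\phi_\delta$, in a neighbourhood of an arbitrary point of the singular fiber.

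First I would fix a point $x_0\in\pi^{-1}(\mathcal S)$ and work in a coordinate polydisc $U\times V\subset\mathcal X$ with $\pi$-coordinates $t$ on the base and fibre coordinates $z$, chosen so that $D$ is (generically) a union of coordinate hyperplanes. Following Paun~\cite[\S 3.3]{Paun}, the key is that the relative K\"ahler--Einstein equation forces an upper bound on the potential in terms of a fibre integral: differentiating the Monge--Amp\`ere equation~\eqref{e:mixMA} along the base and using the K\"ahler--Einstein normalization $\Ric\,\rho_{y}=-\rho_y+\sum(1-\beta_i)[D_i|_{\mathcal X_y}]$, one controls $\phi$ by $-\log\big(\int_{\mathcal X_y}\Omega_y\big)$ for a suitable volume form $\Omega_y$ on the fibre whose total mass is (up to a constant depending only on $U$) a \emph{cohomological} quantity, hence locally bounded below in $y$ even as $y\to\mathcal S$. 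Concretely I would use the uniform-in-$t$ potential estimate \eqref{c00} of Lemma~\ref{uni} on compact subsets $K\subset Y_0$, together with the fact that the normalized volume $\int_{\mathcal X_y}\rho_y^{\,n}$ equals the intersection number $\big(c_1(K_{\mathcal X_y}+\sum(1-\beta_i)D_i|_{\mathcal X_y})\big)^n$, which is constant for $y\in Y_0$; this prevents the potential from blowing up to $+\infty$ as the fibre degenerates. The cone-deformation already in place then lets me pass from the conic currents $\rho_\delta$ (for which Paun's argument in~\cite{Paun} — or rather its adaptation to the conic setting — applies) to $\rho_{mix}$ by taking $\delta\to 0$ and using that upper bounds are preserved under decreasing/$L^1_{\rm loc}$ limits.

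The main obstacle I expect is exactly the behaviour of the fibrewise volume and the reference form near the singular fibre: $\tilde\omega$ is only in a K\"ahler \emph{class} (not K\"ahler) on $\mathcal X$, the K\"ahler--Einstein class can degenerate as $y\to\mathcal S$, and the mixed-cone/cusp singularities of $\rho_y$ along $D$ make the fibre integrals improper, so one must check that the Poincar\'e-type cusp directions contribute only a bounded (logarithmic) error — this is where the second bullet of Theorem~\ref{t:conetocusp}, i.e.\ the $\delta$-uniform lower bound $-\sum_i\log\log^2|s_i|_{h_i}-C\leq\phi_\delta$, is essential, since it pins down the cusp asymptotics uniformly. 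Once the upper bound $\phi\leq C$ near $\pi^{-1}(\mathcal S)$ is established, the extension is immediate: the trivial extension $\tilde\rho_{mix}$ of $\rho_{mix}$ across $\pi^{-1}(\mathcal S)$ is a closed positive current, and since it agrees with a current in the global class $c_1(K_{\mathcal X/Y}+\sum(1-\beta_i)D_i)$ on the complement of an analytic set, it represents that class on all of $\mathcal X$. I would finish by noting that the extended current is independent of the choices only up to the same ambiguity already noted for $\rho$ in the conic case.
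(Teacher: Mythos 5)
Your overall strategy is the right one --- reduce to a uniform upper bound for the local potential near $\pi^{-1}(\mathcal S)$ and then extend the positive current across the analytic set --- and you correctly point to Paun \cite[\S 3.3]{Paun} as the source. But the mechanism you propose for obtaining that upper bound is not the one that works, and as stated it has a genuine gap. You claim that the potential is controlled by $-\log\bigl(\int_{\mathcal X_y}\Omega_y\bigr)$ and that the constancy of the fibrewise volume $\int_{\mathcal X_y}\rho_y^{\,n}=\bigl(c_1(K_{\mathcal X_y}+\sum(1-\beta_i)D_i|_{\mathcal X_y})\bigr)^n$ ``prevents the potential from blowing up.'' Constancy of the total mass of $(\ddbar\varphi_y)^n$ does not give a pointwise bound $\varphi_y(x_0)\le C$ as $y\to\mathcal S$: the local potential on a Stein neighbourhood is normalized by the Monge--Amp\`ere equation itself, not by a fibre integral, and a measure of fixed total mass can have a potential that is arbitrarily large at a given point. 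Likewise, Lemma \ref{uni} only gives estimates uniform on compact subsets $K\subset Y_0$, with a constant that may degenerate as $K$ exhausts $Y_0$; you acknowledge this obstacle but never resolve it.

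The actual proof proceeds quite differently. One writes the fibrewise equation on a Stein neighbourhood $\Omega$ of a point $x_0$ as $(\ddbar\varphi_y)^n=e^{\varphi_y-G}\left|\tfrac{dz}{dt}\right|^2$ and observes the key structural fact: $G$ is \emph{uniformly bounded above} on $\Omega$ because the Jacobian $\tfrac{dz}{dt}$ of the fibration only has zeros (never poles) along the singular fibres. Then one recovers $\varphi_y(x_0)$ via Demailly's Bergman-kernel approximation $\varphi_y(x_0)=\lim_m\sup_{f\in H_{m,y}}\tfrac1m\log|f(x_0)|$, and bounds each competitor $f$ by extending it holomorphically to $\Omega$ with the $L^{2/m}$ Ohsawa--Takegoshi theorem of Berndtsson--Paun and estimating the extension using the equation and H\"older's inequality; the result is $\varphi_y(x_0)\le C$ with $C$ independent of $y$. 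Neither Ohsawa--Takegoshi nor the Bergman-kernel approximation appears in your argument, and these are precisely the tools that convert the qualitative statement ``the Jacobian only vanishes'' into a quantitative upper bound at a fixed point across degenerating fibres. Your fallback suggestion --- apply Guenancia's conic extension to $\rho_\delta$ and pass to the limit $\delta\to 0$ by weak compactness of positive currents in converging classes --- is a plausible alternative route, but you would still need to identify the limit with $\rho_{mix}$ on $\mathcal X_0$ (ruling out mass concentrating on $D$ or on $\pi^{-1}(\mathcal S)\cap\mathcal X_0$), and you do not carry this out.
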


\begin{proof}
We pick a point $x_0$ in $\mathcal X_0=\pi^{-1}(Y_0)$, and choose a Stein neighborhood $\Omega$ of $x_0$ in $\mathcal X$; we write $\Omega_y = \Omega \cap \mathcal X_y$, choose
a potential $\varphi_y$ of $\rho_y$ so that (up to adding a pluriharmonic function to $\psi_y$) the equation satisfied by $\varphi_y$ on $\Omega_y$ is (c.f. \eqref{e:appro})
\begin{equation}\label{singularKE}(\ddbar \varphi_y)^n = e^{\varphi_{y}-G}\left| \frac {dz}{dt} \right|^2\end{equation}
where $G$ is function on $\Omega_y$ and the coordinates $(z_1, \ldots, z_n, t_1, \ldots, t_m)$ are chosen so that $p(\underline z, \underline t)=\underline t$. By comparing to \eqref{e:appro}, crucially, we claim that $G$ is uniformly bounded above on $\Omega$ and hence on $\mathcal X$. This is due to the fact that for a singular fibration between smooth manifolds, $\frac{dz}{dt}$ will only have zeros. 
We set
$$H_{m,y}:=\left\{f\in \mathcal O(\Omega_y); \int_{\Omega_y}|f|^2e^{-(m-1)\varphi_y}\left| \frac {dz}{dt} \right|^2 \le 1\right\}$$

Note that we used the weight $(m-1)\varphi_y$ instead of $m\varphi$ in the above integral. By Demailly's regularization Theorem \cite{Demailly}, we have
 \begin{equation}\label{Dreg}\varphi(y)(x_0)= \lim_{m\to \infty}\sup_{f\in H_{m,y}} \frac 1 m \log |f(x_0)|\end{equation}

Fix $f\in H_{m,y}$, 
 the $L^{2/m}$ version of Ohsawa-Takegoshi extension theorem \cite{BP2} yields a holomorphic function $F$ on $\Omega$ that extends $f$ and such that
\begin{align*}|F(x_0)|^{2/m} &\le C_{\Omega} \int_{\Omega} |F|^{2/m} |dz|^2 \le C \int_{\Omega_y}|f|^{2/m}\left| \frac {dz}{dt} \right|^2= C \int_{\Omega_y}|f|^{2/m} e^{-\varphi_y+G}(\ddbar \varphi_y)^n,\end{align*}
where in the last equation, we have used the equation \eqref{singularKE}. We further deduce by Holder inequality that
\begin{align*} \int_{\Omega_y}|f|^{2/m} e^{-\varphi_y+G}(\ddbar \varphi_y)^n&\leq C'\int_{\Omega_y}|f|^{2} e^{-m\varphi_y+mG}(\ddbar \varphi_y)^n\\&=C'\int_{\Omega_y}|f|^{2} e^{-(m-1)(\varphi_y-G)}\left| \frac {dz}{dt} \right|^2\\&\leq C'e^{(m-1)\sup G},\end{align*}
where in the last inequality we have used the assumption that $f\in H_{m,y}$.
Hence $|F(x_0)|^{\frac{2}{m}}=|f(x_0)|^{\frac{2}{m}}\leq C'$, by \eqref{Dreg}, $ \varphi_y(x_0) \le C$. The lemma is proved.
 \end{proof}

\begin{proof}[Proof of Theorem \ref{t1} and Theorem \ref{psef}]It is clear that Theorem \ref{t1} follows from Lemma \ref{OT}.
   Set the class \(\{\gamma\} := c_1(L) + \varepsilon\omega\), which is K\"ahler for every positive real number \(\varepsilon > 0\). Then, according to Theorem~\ref{t1}, we obtain that the class $c_1(K_{X/Y} + L + F_{\alpha}) + \varepsilon \omega$ is pseudo-effective. Let $\epsilon\rightarrow 0,$ the limit class $c_1(K_{X/Y} + L + F_{\alpha})$ is also pseudo-effective. Theorem \ref{psef} is also proved.
\end{proof}
\begin{remark}\label{newproof} We remark that Guenancia's Theorem \ref{t:positivity} result is sufficient for the above Theorem as follows: fix a K\"ahler class $[\omega]$ such that $[\omega]-[D]$ is still K\"ahler. Then the conic version of positivity result implies that the class $K_{X/Y}+(1-\epsilon)D+\epsilon(\omega-D)$ is pseudo-effective. Letting $\epsilon\to 0$ will finish the proof.
\end{remark}
Using the same idea,  we slightly generalize Theorem \ref{t1} as follows.

\noindent \textbf{Set-up:} Let $\pi:\mathcal X\to Y$ a holomorphic surjective map between compact K\"ahler manifolds, $D=\sum_{i=1}^r(1-\beta_i) D_i$, where $D_i$ is a reduced divisor with generically simple normal crossings and   $\beta_i\in [0,1]$ such that 
\begin{enumerate}\label{setup1}
\item the cohomology class $c_1(K_{\mathcal X_y}+\sum_{i=1}^r (1-\beta_i){D_i}_{|{\mathcal X_y}})$ is big and nef for every  $y\in Y\setminus\mathcal{S} $, where $\mathcal S$ is the singular locus of $\pi$,
\item further assume that $c_1(K_{\mathcal X_y}+\sum_{i=1}^r (1-\beta_i){D_i}_{|{\mathcal X_y}}-\sum_{i=1}^rd_iD_i|_{\mathcal X_y})$ is K\"ahler, where $d_i>0,1\leq i\leq r$ are fixed positive constants.
\end{enumerate}

\begin{proposition}\label{t1.1}Under the set-up above, then by gluing the fiberwise twisted K\"ahler-Einstein metric with mixed singularities, we obtain a closed positive current $\rho_{mix}$ in 
$ c_1(K_{\mathcal X/Y}+ \sum_{i=1}^r(1-\beta_i) D_i)$.
\end{proposition}

\begin{proof}Under the assumption of Theorem \ref{t1.1}, if we fix a generic fiber, \cite{TY,BG,DT2} proves that, there is a  smooth K\"ahler-Einstein metric on $\mathcal X_y\setminus D_y$ by solving the following equation
\begin{equation}\label{bb}(-\Ric(\omega)+\Ric(D)+\ddbar\phi_{mix})^n=\frac{e^{\phi_{mix}}\omega^n}{\prod_{i=1}^s|s_i|_{h_i}^{2}\prod_{i=s+1}^r|s_i|_{h_i}^{2-2\beta_i}}.\end{equation}

\noindent By assumption, there is a positive constant $\delta_0$ such that, when $0<\delta<\delta_0$,  $$c_1(K_{\mathcal X_y}+D_{|{\mathcal X_y}})-\delta\Theta$$ is a K\"ahler class, where $\Theta:=-\sum_{i=1}^r d_i\ddbar\log(h_i)$. Again we consider the following family of complex Monge-Ampere equations related to conic K\"ahler-Einstein metrics
\begin{equation}\label{AA}(-\Ric(\omega)+\Ric(D)-\delta\Theta+\ddbar\phi_\delta)^n=\frac{e^{\phi_\delta}\omega^n}{\prod_{i=1}^s|s_i|_{h_i}^{2-2\delta d_i}\prod_{i=s+1}^r|s_i|_{h_i}^{2-2\beta_i-2\delta d_i}}.\end{equation}
By \cite{GP}, the above equations admit unique conic solutions depending on $\delta>0$. 


To get an analogue result as Lemma \ref{uni}, we shall get uniform $C^0$ estimate (with barrier) for the K\"ahler potential for a smooth fibration $\mathcal X_0\to Y_0$. So again, we fix a  compact set $K\subset Y_0$. 
\end{proof}

\begin{lemma}\label{c000}Let $\phi_\delta$ be the solution of \eqref{AA},
then for any $\epsilon>0$, there exits a positive constant $C_\epsilon$ such that
$$C\geq \phi_\delta\geq C_\epsilon-\sum_{i=1}^r\log\log^2|s_i|^2_{h_i}+\sum_{i=1}^r\epsilon\log|s_i|^{2d_i}_{h_i}.$$

\end{lemma}
\begin{proof}It is easy to show that there is uniform upper bounds for $\phi_\delta$ and $t\in Y_0$. By the maximum principle and equation \eqref{AA}, we obtain,
$$\phi_\delta+\sum_{i=1}^r(\log\log^2|s_i|^{2}_{h_i}-\epsilon\log|s_i|^{2d_i}_{h_i})\geq\log\frac{(-\Ric(\omega)+\Ric(D)-(\delta+\epsilon)\Theta-\ddbar\sum_{i=1}^r\log\log^2|s_i|^2_{h_i})^n}{\prod_{i=1}^s|s_i|_{h_i}^{2\delta d_i-2}\prod_{i=s+1}^r|s_i|_{h_i}^{2\delta d_i+2\beta_i-2}\prod_{i=1}^r\log^{-2}|s_i|^2_{h_i}\omega^n}.$$
Note that $-\Ric(\omega)+\Ric(D)-\epsilon\Theta$ is a K\"ahler class for fixed $\epsilon$, then the  same argument as in Lemma \ref{uni} show that 
$$\phi_\delta\geq C-\sum_{i=1}^r\log\log^2|s_i|^2_{h_i}+\sum_{i=1}^r\epsilon\log|s_i|^{2d_i}_{h_i},$$
for some $C$ independent of $t\in K$ and $\delta$. The Lemma is proved.
\end{proof}
We also prove a convergence result of $\phi_\delta$ (without passing to subsequence) as $\delta\to 0$, which is parallel to Lemma \ref{t:conetocusp}. This is based on a recent result of \cite{DT2}.
\begin{lemma} \label{c2}Let $\phi_\delta$ be the solution to \eqref{AA}, there exist constants $N,C>0$ such that for all $0<\delta$ sufficiently small,  
\begin{equation}
\sup_{X} \left(  |\sigma_D|^{N}_{h_D} \right)  |\Delta_\theta \phi_{ \delta} | \leq C,
\end{equation}
where $\Delta_\theta$ is the Laplace operator with respect to a fixed K\"ahler metric $\theta$ on $X$. 
In particular, as $\delta\to 0$, 
the conic K\"ahler-Einstein metric $\rho_\delta:=-\Ric(\omega)+\Ric(D)-\delta\Theta+\ddbar\phi_\delta$  converge to the K\"ahler-Einstein metric $\rho_{mix}$ with mixed singularities, in the $\mathscr C_{\rm loc}^{\infty}(X\smallsetminus D)$ topology.
\end{lemma}
\begin{proof} We are going to prove a uniform $C^2$ estimate with barrier. By assumption, we may assume that for some sufficiently small constant $\delta_0$, it holds that
$$-\delta_0\Theta-\Ric(\omega)+\Ric(D)\geq 2\theta.$$
Let $\omega' =  -\Ric(\omega)+\Ric(D)-\delta\Theta+\ddbar\phi_\delta$. Then we consider the quantity $$H= \log tr_\theta (\omega') - B\phi_{ \delta} +B\delta_0\sum_{i=1}^r \log |\sigma_D|^{2d_i}_{h_D}$$
for some large constants $B$ to be determined.

By the $C^0$ estimate of $\phi_{\delta}$, $H$ is bounded above. Standard calculations (cf. \cite[Lemma 3.7]{Gabor}) show that 
\begin{equation}\label{eq:aubin-yau-c2}
\Delta ' \log tr_{\theta}{\omega'} \geq -Ctr_{\omega'}{\theta} - \frac{tr_{\theta}Ric(\omega')}{tr_{\theta}(\omega')}, 
\end{equation} 
where $C$ depends on bisectional curvature of $\theta$.
Direct calculation shows that $$-tr_{\theta}Ric(\omega') \geq \frac{-C}{|\sigma_D|_{h_D}^{2l}},$$ for some constant $C>0$ independent of $\delta$.  Together with \eqref{eq:aubin-yau-c2} and our assumption, we see that 
\begin{align*}
\Delta'H &\geq -Ctr_{\omega'}{\theta}  - \frac{C}{|\sigma_D|_{h_D}^{2l}tr_{\theta}(\omega')} + Btr_{\omega'}(-\Ric(\omega)+\Ric(D)-\delta\Theta -\omega')-B\delta_0tr_{\omega'}\Theta \\
&\geq (B - C)tr_{\omega'}{\theta} - \frac{C}{|\sigma_D|_{h_D}^{2l}tr_{\theta}(\omega')} - Bn\\
&\geq tr_{\omega'}{\theta} - \frac{C}{|\sigma_D|_{h_D}^{2l}tr_{\theta}(\omega')} - Bn \hspace{0.5in} (\text{if } B>>1)\\ 
&\geq (tr_\theta (\omega'))^{\frac{1}{n-1}}(\frac{\theta^n}{\omega'^n})^{\frac{1}{n-1}} - \frac{C}{|\sigma_D|_{h_D}^{2l}tr_{\theta}(\omega')} - Bn\\
&\geq  (tr_\theta (\omega'))^{\frac{1}{n-1}}|\sigma_D|_{h_D}^{2\alpha} - \frac{C}{|\sigma_D|_{h_D}^{2l}tr_{\theta}(\omega')} - Bn,
\end{align*}
for some constant $\alpha$ independent of $\delta$ and $B>>1$ so that $B > C+1$ in line three.

Now assume that $ H$ obtains maximum at a point $p$. Moreover, since $H$ goes to $-\infty$ on $\mathrm{Supp}(D)$, clearly $p\notin \mathrm{Supp}(D)$.  From the maximum principle it follows that at point $p$,
\begin{equation} \frac{C}{|\sigma_D|_{h_D}^{2l}tr_{\theta}(\omega')}+Bn\geq (tr_\theta (\omega'))^{\frac{1}{n-1}}|\sigma_D|_{h_D}^{2\alpha}.\end{equation}
We first assume that $|\sigma_D|^{2l}_{h_D}tr_\theta(\omega')\geq 1$ at $p$, then \begin{equation}(tr_\theta (\omega'))^{\frac{1}{n-1}}|\sigma_D|_{h_D}^{2\alpha}\leq C+Bn.\end{equation}
Notice that the other case is $|\sigma_D|^{2l}_{h_D}tr_\theta(\omega')\leq 1$ at $p$,
if follows that in both cases there is an integer $k$ (depending on $\alpha,l$ and $n$) such that
\begin{equation}|\sigma_D|_{h_D}^{2k}(tr_\theta\omega')(p)\leq C+Bn.\end{equation} Notice that $\phi_\delta\geq \epsilon\log |\sigma_D|^2_{h_D}-C_\epsilon$ for any $\epsilon>0$, so it follows  by choosing a $B>>k$ that 
$H(p)\leq C+Bn$. Now fixing this $B$, we have $H(x)\leq C$ 
for any $x\in X\setminus\mathrm{Supp}(D)$. By the definition of $H$, we have
\begin{align*}
|\sigma_D|^{2B}_{h_D}(tr_{\theta}\omega')(x) \leq C.
\end{align*}  Choosing $N=2B$ will finish the proof for $C^2$ estimate.

By standard higher order estimate, $\phi_\delta$ converge, up to subsequence, to a solution $\phi_{0}$ of \eqref{bb} outside divisor $D$, as $\delta\to 0$. By \cite[Theorem 1.2]{DT2}, $\phi_\delta$ also converge (without passing to subsequence) to $\phi_{mix}$ in the sense of current.  So $\phi_\delta$ converge to  $\phi_{mix}$ smoothly outside $D$
without passing to subsequence. The lemma is proved.
\end{proof}

\begin{proof}[Proof of Proposition \ref{t1.1}:] To prove the proposition, we may use the same argument as Theorem \ref{t1} once  Lemma \ref{c000}, Lemma \ref{c2} and Lemma \ref{l:conedeformation} are proved. \end{proof}

\section{A question of Demailly-Peternell-Schneider: Smooth K\"ahler case}
In this section, we prove Theorem \ref{Demailly}.
\begin{theorem}\label{DEM}(=Theorem \ref{Demailly}) Suppose $\pi:(X,D)\to Y$ is fibration between two compact K\"ahler manifolds. Further suppose that $(X,D)$ is log canonical and $-(K_X+D)$ is nef, then $-K_Y$ is pseudo-effective. 
\end{theorem}
\begin{proof} Set $L:=-(K_X+D)$. Note that $L$ is nef and $K_X+D+L$ is linearly equivalent to zero, then we apply Theorem \ref{psef} to conclude that $K_{X/Y}+D+L=-f^*{K_Y}$ is pseudo-effective. Since $f$ has connected fiber, so $-K_Y$ is also  pseudo-effective.
\end{proof}
\section{Surjectivity of the Albanese map}

In this section, we prove Theorem \ref{t2} by following the argument of \cite{Paun}. Firstly, we recall the definition of Albanese map. Let \(q := h^{1}(X, \mathcal{O}_{X})\) denote the irregularity of \(X\), and define the Albanese torus of \(X\) as
\[
\mathrm{Alb}(X) := H^{0}(X, T_{X}^{*})^{*} / H_{1}(X, \mathbb{Z}).
\]
Recall that the Albanese map \(\alpha_{X} \colon X \to \mathrm{Alb}(X)\) is defined by
\[
\alpha_{X}(x)(\gamma) := \int_{x_{0}}^{x} \gamma,
\]
modulo the subgroup \(H_{1}(X, \mathbb{Z})\); that is, modulo the integrals of \(\gamma\) along loops based at \(x_{0}\).

\begin{theorem} \label{alb}(=Theorem \ref{t2})
Let \(X\) be a compact K\"ahler manifold, and let $D=\sum D_i$ be a simple normal crossing divisor on \(X\), and log canonical divisor \(D_{\alpha} = \sum \alpha_i D_i\) with coefficients \(\alpha_i \in [0,1]\). Assume that the \(\mathbb{R}\)-line bundle \(-(K_{X} + D_{\alpha})\) is nef. Then the Albanese morphism \(\alpha_{X}\colon X \to \mathrm{Alb}(X)\) is surjective.
\end{theorem}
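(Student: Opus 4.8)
The plan is to argue by contradiction, following P\u{a}un~\cite{Paun}: assuming $\alpha_X$ is not surjective, I will produce a fibration of (a birational model of) $(X,D_\alpha)$ onto a positive-dimensional compact K\"ahler manifold $Y$ of general type whose general fiber is log--Calabi--Yau, and then play the pseudoeffectivity of the relative log-canonical class --- supplied by Theorem~\ref{t1} --- against the nefness of $-(K_X+D_\alpha)$. I first reduce to the case where $X$ is not uniruled: by the K\"ahler analogue of the Boucksom--Demailly--P\u{a}un--Peternell criterion, if $K_X$ (equivalently $K_X+D_\alpha$, since $D_\alpha\geq0$) is not pseudoeffective then $X$ is uniruled, and one replaces $X$ by a resolution $R$ of the base of its maximal rationally connected fibration; this leaves $\mathrm{Alb}(X)\cong\mathrm{Alb}(R)$ and the surjectivity question unchanged, makes $R$ non-uniruled, and, via the canonical bundle formula, endows $R$ with a boundary $D_R$ so that $(R,D_R)$ is log canonical with $-(K_R+D_R)$ nef. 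Next, since the Albanese image always generates $\mathrm{Alb}(X)$ as a group, $\alpha_X(X)$ is not a translate of a proper subtorus, so Ueno's structure theorem for subvarieties of an abelian variety yields a subtorus $A_0\subset\mathrm{Alb}(X)$ and a \emph{positive-dimensional} subvariety $W_0$ of general type in $\mathrm{Alb}(X)/A_0$ over which $\alpha_X(X)$ is an $A_0$-bundle. Composing $\alpha_X$ with the projection onto $W_0$, resolving the indeterminacy, taking the Stein factorization, and arranging the modifications so that the boundary keeps its coefficients in $[0,1]$, I obtain a fibration $f\colon X\to Y$ with connected fibers onto a smooth compact K\"ahler manifold $Y$ of general type with $\dim Y\geq1$, still with $(X,D_\alpha)$ log canonical and $-(K_X+D_\alpha)$ nef. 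Since $-(K_X+D_\alpha)$ is nef, $K_X+D_\alpha$ cannot be big, so $\dim Y<\dim X$; the general fiber $F$ is therefore positive-dimensional, and it is not uniruled because $X$ is not.

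By adjunction $(F,D_\alpha|_F)$ is a smooth log canonical pair with $(K_{X/Y}+D_\alpha)|_F=K_F+D_\alpha|_F$ and $-(K_F+D_\alpha|_F)$ nef; as $F$ is not uniruled, $K_F$ --- hence $K_F+D_\alpha|_F$ --- is pseudoeffective, and a pseudoeffective class whose negative is nef is numerically trivial, so $c_1(K_F+D_\alpha|_F)=0$ and the general fiber is log--Calabi--Yau. Now fix a general smooth ample divisor $H$ on $X$, transverse to $D_\alpha$ and to all fibers of $f$, and a parameter $\epsilon\in(0,1)$. Then $(X,D_\alpha+\epsilon H)$ is again a smooth log canonical pair, and for every $y\notin\mathcal S$ the class
\[
c_1\big(K_{X_y}+D_\alpha|_{X_y}+\epsilon\,H|_{X_y}\big)=\epsilon\,c_1\big(H|_{X_y}\big)
\]
is K\"ahler, because $c_1(K_{X_y}+D_\alpha|_{X_y})=0$ and $H$ is ample. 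Hence Theorem~\ref{t1} applies to $f\colon X\to Y$ with boundary $D_\alpha+\epsilon H$ (and $\gamma=0$) and gives that $c_1(K_{X/Y}+D_\alpha+\epsilon H)$ is pseudoeffective; letting $\epsilon\to0^+$ and using that the pseudoeffective cone is closed, $c_1(K_{X/Y}+D_\alpha)$ is pseudoeffective.

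To finish: since $Y$ is of general type, $f^*c_1(K_Y)$ is pseudoeffective, so
\[
-\,c_1(K_{X/Y}+D_\alpha)=-\,c_1(K_X+D_\alpha)+f^*c_1(K_Y)
\]
is a sum of a nef class and a pseudoeffective class, hence pseudoeffective. Thus $c_1(K_{X/Y}+D_\alpha)$ and its negative are both pseudoeffective, which forces $c_1(K_{X/Y}+D_\alpha)=0$ in $H^{1,1}(X,\R)$, i.e. $c_1(K_X+D_\alpha)=f^*c_1(K_Y)$. Then $f^*c_1(K_Y)$ is pseudoeffective while $-f^*c_1(K_Y)$ is nef; pairing with $\omega^{\dim X-1}$ for a K\"ahler form $\omega$ on $X$ shows the closed positive current representing $f^*c_1(K_Y)$ has zero mass, hence vanishes, so $f^*c_1(K_Y)=0$, and by injectivity of $f^*$ on second cohomology $c_1(K_Y)=0$ --- contradicting that $Y$ has big, in particular nonzero, canonical class with $\dim Y\geq1$.

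The analytic input is thus just the single application of Theorem~\ref{t1} together with the harmless limit $\epsilon\to0^+$; the real work --- and the step I expect to be the main obstacle --- is the entirely birational part carried out in the K\"ahler, merely log canonical setting: the K\"ahler uniruledness criterion, the construction of $D_R$ with ``$(R,D_R)$ log canonical and $-(K_R+D_R)$ nef'' preserved under the maximal rationally connected fibration, and the bookkeeping through the resolution of indeterminacy in the Ueno reduction (keeping the boundary coefficients in $[0,1]$ and the general fiber log--Calabi--Yau). Once these structural reductions are secured, the positivity of $\rho_{mix}$ established in Theorem~\ref{t1} does the rest.
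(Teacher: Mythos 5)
Your overall strategy (contradiction, Ueno's theorem to get a positive-dimensional general-type image, a fibration over it, pseudoeffectivity of the relative adjoint class from Theorem \ref{t1}, and a numerical contradiction) is the same as the paper's, and your endgame --- ``$c_1(K_{X/Y}+D_\alpha)$ and its negative are both psef, hence zero, hence $f^*c_1(K_Y)=0$'' --- is an attractive simplification of the paper's argument with two distinct currents supported on the exceptional divisor. But there are genuine gaps. First, the uniruledness reduction is both unnecessary and unjustified: producing a boundary $D_R$ on the MRC base with $(R,D_R)$ log canonical and $-(K_R+D_R)$ nef is essentially a canonical-bundle-formula statement for lc pairs in the K\"ahler category that you cannot wave through, and the paper never needs it. The reason you need it is that you insist on making $c_1(K_{X_y}+D_\alpha|_{X_y}+\epsilon H|_{X_y})$ K\"ahler, which forces the fibers to be log--Calabi--Yau. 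The paper instead twists the adjoint bundle by the nef class $L=-(K_X+D_\alpha)$ (plus $\epsilon\omega$, as in Corollary \ref{psef}), so that the fiberwise class equals $E|_{\widehat X_y}+\epsilon\omega|_{\widehat X_y}$, which is K\"ahler for free --- no uniruledness discussion required. Relatedly, your ``general smooth ample divisor $H$'' need not exist on a non-projective compact K\"ahler $X$; the semipositive twist $\gamma$ (a K\"ahler form) is the correct substitute, and you also never address the vertical components of $D_\alpha$, which the paper must split off as $D_v$ before applying Theorem \ref{t1}.

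Second, and more seriously, the step ``resolving the indeterminacy \dots and arranging the modifications so that \dots still $-(K_X+D_\alpha)$ nef'' does not work. The composition $X\to\mathrm{Alb}(X)\dashrightarrow \widehat Y$ is only a rational map to the smooth model $\widehat Y$ of the Stein-factorized image, so one must pass to a modification $\widehat X\to X$ with $K_{\widehat X}=\pi_X^*K_X+E$; after this, $-(K_{\widehat X}+\pi_X^*D_\alpha)=\pi_X^*L-E$ is nef minus an effective exceptional divisor and is \emph{not} nef. Your final contradiction, which needs $-c_1(K_{X/Y}+D_\alpha)$ to be nef plus psef on the total space of an honest fibration, therefore does not apply as stated. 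This is exactly the difficulty the paper's endgame is built to handle: it shows $E-p^*K_{\widehat Y}$ is psef, uses $\kappa(K_{\widehat Y})\geq 1$ to produce two distinct effective $\mathbb Q$-divisors $W_1\not\equiv W_2$ in $|K_{\widehat Y}|_{\mathbb Q}$, and derives a contradiction from the fact that any positive current in the class of the $\pi_X$-contractible divisor $E$ is supported on $\mathrm{Supp}(E)$, whose components have linearly independent classes by Grauert's criterion. Either adopt that argument, or prove that your fibration can be realized as a morphism from $X$ itself onto a smooth general-type $Y$ --- which is precisely what fails in general.
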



Set \(L := -K_{X} - D_{\alpha}\). Note that while \(L\) is assumed to be nef, the anticanonical bundle \(-K_{X} = L + D_{\alpha}\) is, in general, not nef, but only pseudoeffective.
 Assume by contradiction that the Albanese morphism \(\alpha_X\) is not surjective, and let \(Y \subset \mathrm{Alb}(X)\) be its image. The following criterion will be the key to proving the surjectivity of the Albanese map.

\begin{corollary} \cite[Corollary $10.6$]{Ueno} \label{ueno}
Let $V$ be a complex manifold and let $\alpha \colon V \to A(V)$ be the Albanese mapping of $V$. Then we have
\[ \kappa(\alpha(V)) \geq 0 \]
Moreover, $\kappa(\alpha(V)) = 0$ if and only if the Albanese mapping $\alpha$ is surjective.
\end{corollary}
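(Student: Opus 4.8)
The plan is to regard $W := \alpha(V)$ as a compact analytic subvariety of the complex torus $A := A(V)$ (with $\kappa(W)$ understood as the Kodaira dimension of a smooth model of $W$), and to deduce both assertions from the structure theory of subvarieties of complex tori together with the defining property of the Albanese map. The one input special to the Albanese construction is that $W$ \emph{generates} $A$: the subgroup generated by $W - W$ equals $A$, or equivalently $W$ is contained in no proper translated subtorus of $A$. This is precisely where the hypothesis that $\alpha$ is the Albanese map (and not merely some morphism to a torus) is used, and it is a formal consequence of the universal property of $A(V)$.

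The geometric heart of the argument is Ueno's fibration theorem for a subvariety of a complex torus. Let $B \subseteq A$ be the maximal connected subtorus satisfying $W + B = W$, and let $q \colon A \to A/B$ be the quotient homomorphism. Since $W$ is saturated for $q$ one has $W = q^{-1}(\bar W)$ with $\bar W := q(W)$, so $q|_W \colon W \to \bar W$ is a fiber bundle whose fibers are translates of $B$. Ueno's theorem asserts that $\bar W \subseteq A/B$ is of general type, and combined with the vanishing of the Kodaira dimension of the torus fibers this yields
\[
\kappa(W) = \dim W - \dim B = \dim \bar W .
\]
The inequality $\kappa(W) \geq 0$ is then immediate, and $\kappa(W) = 0$ holds if and only if $\dim \bar W = 0$, i.e. if and only if $W$ is a single translate $a + B$ of the subtorus $B$.

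It remains to combine these facts. If $\alpha$ is surjective then $W = A$, and since a complex torus has trivial canonical bundle we obtain $\kappa(W) = \kappa(A) = 0$. Conversely, if $\kappa(W) = 0$ then $W = a + B$, so $W - W = B$; the subgroup generated by $W - W$ is therefore $B$, and the generating property of the Albanese image forces $B = A$. Hence $W = A$ and $\alpha$ is surjective, which establishes the equivalence.

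The main obstacle is Ueno's fibration theorem, and within it the Kodaira-dimension equality $\kappa(W) = \dim \bar W$ together with the general-type conclusion for $\bar W$: this is the substantive geometric input, and it rests on analyzing the Iitaka fibration of $W$ and exploiting translation invariance on $A$ to identify that fibration with the quotient by the stabilizer subtorus $B$. By contrast, the reduction via the generating property of the Albanese map and the elementary computation $\kappa(A) = 0$ for a torus are purely formal.
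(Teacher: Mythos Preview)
The paper does not supply a proof of this statement; it simply quotes it as \cite[Corollary~10.6]{Ueno} and uses it as a black box in the proof of Theorem~\ref{alb}. Your outline is a correct sketch of Ueno's own argument: the structure theorem for subvarieties of a complex torus (the ``Ueno fibration'' identifying the stabilizer subtorus $B$ and yielding $\kappa(W)=\dim \bar W$ with $\bar W$ of general type), combined with the elementary fact that the Albanese image generates the Albanese torus, gives both $\kappa(W)\geq 0$ and the equivalence $\kappa(W)=0 \Leftrightarrow W=A$. So your proposal is consistent with what the paper invokes, and there is nothing further to compare.
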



\begin{proof}[The proof of Theorem \ref{alb}] Firstly, we assume that the generic fiber of the Albanese map is connected. We follow the approach of Păun in \cite{Paun}. We first consider a desingularization \(\widehat{Y}\) of the image \(\alpha_X(X)\). Let \(\overline{X}\) be the fiber product of \(X\) and \(\widehat{Y}\) over \(Y := \alpha_X(X)\). \(\overline{X}\) may be singular, but its singular locus projects onto an analytic subset of \(X\) of codimension at least \(2\). This can be seen, for instance, by considering the rational map \(X \dashrightarrow \widehat{Y}\) obtained by composing the inverse of the resolution map \(\pi_Y\colon \widehat{Y} \to Y\) with the Albanese map \(\alpha_X\). This rational map is defined outside a set of codimension at least \(2\), and \(\overline{X}\) is smooth at each point lying over this regular locus.

Now we take a resolution of singularities \(\widehat{X} \to \overline{X}\) and define the map \(\pi_X \colon \widehat{X} \to X\) as the composition of the desingularization map \(\widehat{X} \to \overline{X}\) with the natural projection \(\overline{X} \to X\). Here, $\pi_X$ is a proper modification of compact K\"ahler manifolds.  Let $E := K_{\widehat{X}} - \pi_X^*(K_X)$ be the exceptional divisors, then the generic fiber of  $p \colon \widehat{X} \to \widehat{Y}$ is disjoint from the support of $E$. In sum, we have the following commutative diagram.

\begin{figure}[H]
    \centering
    \includegraphics[width=0.4\textwidth]{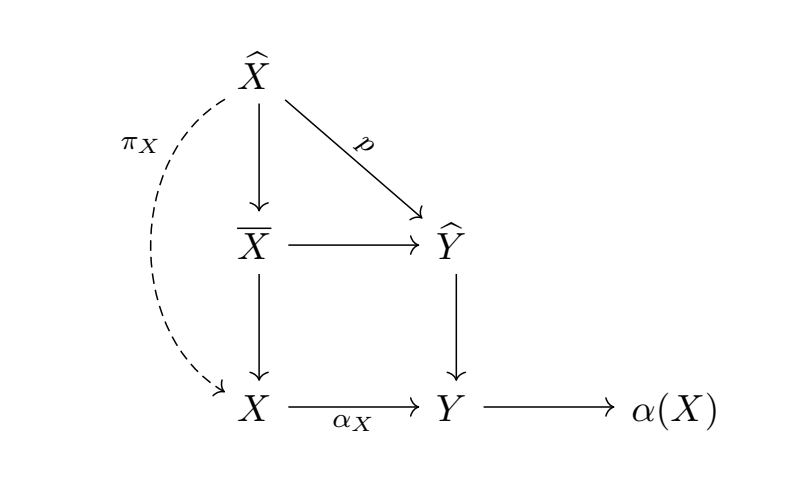}
\end{figure}


Let \( L := -K_X - D_{\alpha} \) be the nef bundle. Then we have
\begin{align*}
    K_{\widehat{X}/\widehat{Y}} + \pi_X^*(-K_X)
    &= K_{\widehat{X}/\widehat{Y}} + \pi_X^*(D_{\alpha} + L) \\
    &= K_{\widehat{X}} - p^*K_{\widehat{Y}} + \pi_X^*(D_{\alpha} + L) \\
    &= \pi_X^*(K_X) + E - p^*K_{\widehat{Y}} + \pi_X^*(D_{\alpha} + L) \\
    &= E - p^*K_{\widehat{Y}}.
\end{align*}
Hence, on the generic fiber \( \widehat{X}_y \) for \( y \in \widehat{Y} \), we get
\begin{align} \label{generic pt}
    \left(K_{\widehat{X}/\widehat{Y}} + \pi_X^*(D_{\alpha} + L)\right)|_{\widehat{X}_y}
    = \left(E - p^*K_{\widehat{Y}}\right)|_{\widehat{X}_y}
    = E|_{\widehat{X}_y}.
\end{align}
Since $E$ is disjoint from the generic fiber of $p$, $E$ is generically $p$ relatively nef.

We now study the positivity of the \( \mathbb{R} \)-line bundle
\[
K_{\widehat{X}/\widehat{Y}} + \pi_X^*D_{\alpha} + \pi_X^*L.
\]
We decompose the pullback \(\pi_X^*D_{\alpha}\) into three (effective) parts:
\[
\pi_X^*D_{\alpha} = D_{\mathrm{ex}} + D_h + D_v,
\]
where \(D_{\mathrm{ex}}\) denotes the \(\pi_X\)-exceptional divisor, which may not be reduced. The divisor \(D_h\) is the \(p\)-horizontal part, i.e., the components that dominate \(\widehat{Y}\), and \(D_v\) is the \(p\)-vertical part, whose components are mapped into proper analytic subsets of \(\widehat{Y}\). By our construction, the divisor \( D_h + D_v \) remains a divisor with simple normal crossing support outside the \( \pi_X \)-exceptional set. We observe that the divisor \( D_h \) fulfills the assumptions of Theorem~\ref{psef}, as each of its irreducible components is generically transverse to the fibers of \( p \) and maps surjectively onto \( \widehat{Y} \) under \( p \). So 
\(
K_{\widehat{X}/\widehat{Y}} + D_h + \pi_X^*L
\)
is pseudoeffective by Theorem~\ref{psef}. Since the divisor $D_v + D_{\mathrm{ex}}$ is effective, it follows that the bundle
\[
K_{\widehat{X}/\widehat{Y}} + \pi_X^*D_{\alpha} + \pi_X^*L
\]
is also pseudo-effective. By the earlier identity, this is equal to
\[
E - p^*(K_{\widehat{Y}}).
\]
Let \(\Lambda\) be a closed positive current representing the cohomology class of \(E - p^*(K_{\widehat{Y}})\). Since the Kodaira dimension \(\kappa(K_{\widehat{Y}}) =\kappa(K_{Y})\geq 1\) by Corollary~\ref{ueno}, we can find two distinct \(\mathbb{Q}\)-effective divisors \(W_1 \not\equiv W_2\) such that
\[
W_1 \sim_{\mathbb{Q}} K_{\widehat{Y}}, \quad W_2 \sim_{\mathbb{Q}} K_{\widehat{Y}}.
\]
In conclusion, we obtain two distinct closed positive currents $T_1$ and $T_2$ in the cohomology class of the exceptional divisor \(E\), namely \(\Lambda + p^*(W_j)\) for \(j = 1, 2\). This leads to a contradiction by \cite[Corollary 4.1]{Paun}. Or one can use the following elementary argument. Since $T_1$ has the same cohomology class as $c_1(E)$, then $T_1=[E]+\ddbar\psi$. Since $T_1$ is positive, $\psi$ is a PSH function on $\hat X\setminus E$. Since $E$ is contracted by $\pi_X$, then Grauert-Remmert Theorem implies that $\psi$ can be extended across $\pi_X(E)$ to a global PSH function on $X$. So $\psi$ is a constant function and $[E]=T_1$. Similarly we have $T_2=[E]$, so $[T_1]=[T_2]$. This is a contradiction.

Now if the Albanese map is not connected, we can pass to the stein factorization of the Albanese map $X\to Y_{stein}\to Alb(X)$. The argument above  shows that the map  $X\to Y_{stein}$ is surjective (here we also use the fact that $\kappa(Y_{stein})\geq\kappa(Alb(X))$ by \cite[Theorem 6.10]{Ueno}). So the Albanese map $X\to  Alb(X)$ is also surjective. The theorem is proved.


\end{proof}

\end{document}